\documentclass[12pt]{article}

\usepackage[utf8]{inputenc}
\usepackage{amsmath, amssymb, amsthm}
\usepackage{geometry}
\usepackage{mathrsfs}
\usepackage{hyperref}
\usepackage{enumitem}
\usepackage{setspace}
\usepackage[numbers]{natbib}

\newtheorem{theorem}{Theorem}[section]

\theoremstyle{definition}
\newtheorem{definition}[theorem]{Definition}
\newtheorem{example}[theorem]{Example}

\theoremstyle{remark}
\newtheorem{remark}[theorem]{Remark}

\usepackage{amsmath}

\title{New Generalized Dual Banach Frames}
\author{S.~Shahsavand and M.~Abolghasemi}
\date{}

\begin{document}
\maketitle

\begin{abstract}
This paper presents a comprehensive formulation and a systematic investigation of g-dual Banach frames in a Banach space \(X\) and its dual space \(X^*\). We study the fundamental structural properties of these frames and establish sufficient conditions for their existence. Furthermore, we explore their relationships with other generalized notions of duality in Banach frame theory. Finally, we investigate the stability of g-dual Banach frames under bounded perturbations, demonstrating their robustness and potential applications in functional analysis.
\end{abstract}
\noindent\textbf{Keywords:} Banach frames, g-dual structures, approximate duality, pseudo-dual systems, perturbation stability.

\section*{Introduction}
The origins of frame theory can be traced to the work of Duffin and Schaeffer
\cite{Duffin1952}, who studied nonharmonic Fourier series and established
ideas that subsequently became fundamental to the development of frame
theory in Hilbert spaces. Their results provided a basis for subsequent
developments of frame expansions and their applications, including
nonorthogonal expansions, signal analysis, and time-frequency methods
\cite{Daubechies1986,Casazza2016}. In contrast to orthonormal bases, frames
permit redundant representations while retaining stable reconstruction,
which makes them particularly useful when uniqueness of representation is
not essential.

The extension of frame-related methods to Banach spaces was developed
through the theory of atomic decompositions and Banach frames
\cite{Grochenig1991}. Subsequent contributions by Casazza, Han, Larson,
Christensen, and others established important aspects of frame expansions
and reconstruction in Banach spaces
\cite{Casazza1999,Casazza2005,Stoeva2008}. These developments broadened
the scope of frame theory to settings in which the geometric and
topological structure of a Hilbert space is no longer available.

Several related notions have subsequently been introduced to address
different reconstruction requirements. These include frames of subspaces
\cite{Casazza2004}, pseudoframes \cite{Li2004}, oblique dual frames
\cite{Christensen2004}, and continuous frames \cite{Ali1993}. Such
constructions provide alternative mechanisms for stable reconstruction
and allow the analysis and reconstruction processes to be adapted to
different coefficient spaces and operator settings.

The study of dual systems has also led to approximate duality in Hilbert
spaces. In particular, Christensen and Laugesen investigated approximately
dual frames and related reconstruction properties
\cite{Christensen2010}. These ideas motivated further investigations of
generalized dual systems in Banach spaces, including perturbation results
for generalized dual Banach frames \cite{Karimizad2014}. Such generalized
duality notions allow the reconstruction operator to be incorporated
explicitly into the duality relation and therefore provide greater
flexibility than the classical dual-frame setting.

Another important generalization is provided by \(g\)-dual frames in
Hilbert spaces, introduced in \cite{Dehghan2013}. In this setting, the
reconstruction relation is formulated through a family of bounded
operators and an associated invertible operator. The corresponding
Banach-space setting is more delicate because the absence of an inner
product requires the reconstruction process to be described in terms of
bounded operators between the Banach space and an appropriate coefficient
space.

In this paper, we study \(g\)-dual Banach frames associated with a Banach
space \(X\) and its dual space \(X^*\). We establish structural properties
and existence criteria for these systems and examine their connections
with related notions of duality. In particular, we investigate the
behavior of \(g\)-dual Banach frames under perturbations of the synthesis
and analysis sequences. Our results provide sufficient conditions for the
preservation of the \(g\)-dual Banach frame property and describe the
corresponding changes in the reconstruction operators. These results
clarify the stability of generalized duality under bounded perturbations
and extend several reconstruction principles from classical frame theory
to the Banach-space setting.

\section{Preliminaries}

This section introduces the notation and preliminary notions used
throughout the paper. Let \(X\) be a Banach space with dual space \(X^*\),
and let \(I\) be a countable index set equipped with a fixed ordering.
We denote by \(B(X)\) the Banach algebra of all bounded linear operators
on \(X\).

A scalar sequence space \(X_a\) is called a BK-space if \(X_a\) is a
Banach space and each coordinate functional on \(X_a\) is continuous.
If the canonical unit vectors \(\{e_i\}_{i\in I}\) form a Schauder basis
for \(X_a\), then \(X_a\) is called a CB-space. A reflexive CB-space is
called an RCB-space. We use the above notions and notation in the sense
of \cite{Karimizad2014}.

\medskip

\begin{definition}
	Let \(X\) be a Banach space and let \(X_a\) be a BK-space. Suppose that
	\(\{f_i\}_{i\in I}\subset X^*\) is a sequence of bounded linear
	functionals, and let \(S_I:X_a\to X\) be a bounded linear operator.
	The pair \((\{f_i\}_{i\in I},S_I)\) is called a Banach frame for \(X\)
	with respect to \(X_a\) if the following conditions are satisfied:
	\begin{enumerate}
		\item For every \(x\in X\), the sequence
		\(\{f_i(x)\}_{i\in I}\) belongs to \(X_a\).
		
		\item There exist constants \(A,B>0\) such that
		\[
		A\|x\|
		\leq
		\left\|\{f_i(x)\}_{i\in I}\right\|_{X_a}
		\leq
		B\|x\|,
		\qquad x\in X.
		\]
		
		\item The operator \(S_I\) reconstructs every \(x\in X\) from its
		coefficient sequence, that is,
		\[
		S_I\left(\{f_i(x)\}_{i\in I}\right)=x,
		\qquad x\in X.
		\]
	\end{enumerate}
\end{definition}

For a Banach frame \(\{f_i\}_{i\in I}\), the associated analysis operator
is the bounded linear operator \(U:X\to X_a\) defined by
\[
U(x)=\{f_i(x)\}_{i\in I},
\qquad x\in X.
\]
The reconstruction property can then be written as
\[
S_IU=I_X,
\]
where \(I_X\) denotes the identity operator on \(X\). In particular,
\(U\) is bounded and bounded below. Moreover, if \(A\) and \(B\) are
frame bounds, then
\[
A\|x\|\leq \|Ux\|_{X_a}\leq B\|x\|,
\qquad x\in X,
\]
and hence
\[
\|U\|\leq B,
\qquad
\|S_I\|\geq \frac{1}{A}.
\]
Conversely, since \(S_IU=I_X\),
\[
\|x\|
\leq
\|S_I\|\,\|Ux\|_{X_a},
\qquad x\in X,
\]
so that
\[
\|S_I\|^{-1}\|x\|
\leq
\|Ux\|_{X_a}.
\]
Thus, \(\|S_I\|^{-1}\) is also a valid lower frame bound, although it
need not coincide with the particular lower bound \(A\) chosen above.

\begin{example}
Consider the Banach space \(X=\ell^1\), whose elements are absolutely summable sequences, and take the coefficient space to be \(X_a=\ell^\infty\) endowed with the supremum norm.

Let \(\{f_i\}_{i\in\mathbb{N}}\subset X^*\) be the family of coordinate functionals defined by
\[
f_i(x)=x_i, \qquad x=\{x_j\}_{j\in\mathbb{N}}\in \ell^1.
\]
Each functional \(f_i\) is a bounded linear functional on \(X\). Moreover, the associated coefficient sequence is given by
\[
\{f_i(x)\}_{i\in\mathbb{N}}=\{x_i\},
\]
which belongs to \(\ell^\infty\), since every absolutely summable sequence is necessarily bounded.

We define the reconstruction operator 
\[
S_l:\ell^\infty \rightarrow \ell^1
\]
by
\[
S_l(\{c_i\})=\left\{\frac{c_i}{i^2}\right\}.
\]
This mapping assigns to each bounded sequence a sequence that is absolutely summable. Indeed, if \(\{c_i\}\in\ell^\infty\), then there exists \(M>0\) such that
\[
|c_i|\leq M, \qquad i\in\mathbb{N}.
\]
Consequently,
\[
\sum_{i=1}^{\infty}\left|\frac{c_i}{i^2}\right|
\leq
M\sum_{i=1}^{\infty}\frac{1}{i^2}<\infty,
\]
which shows that \(S_l(\{c_i\})\in\ell^1\).
\[
\sum_{i=1}^\infty \left| \frac{c_i}{i^2} \right| \leq \left( \sup_i |c_i| \right) \sum_{i=1}^\infty \frac{1}{i^2} < \infty.
\]

The above construction leads to the following observation
\begin{itemize}
  \item For any \( x \in \ell^1 \), the sequence \( \{f_i(x)\} \) lies in \( \ell^\infty = X_a \).

  \item Define \( C := \sum_{i=1}^\infty \frac{1}{i^2} \), and set \( A = \frac{1}{C} \), \( B = 1 \). Then, for every \( x \in \ell^1 \), the following inequality holds:
\[
A \| x \|_{\ell^1} \leq \left\| \{f_i(x)\} \right\|_{\ell^\infty} \leq B \| x \|_{\ell^1}.
\]
 This estimate illustrates the damping effect introduced by the reconstruction operator.
 
 \item Applying the reconstruction operator \(S_l\) to the coefficient sequence gives
 \[
 S_l(\{f_i(x)\}_{i\in\mathbb{N}})
 =
 \left\{\frac{x_i}{i^2}\right\}_{i\in\mathbb{N}},
 \]
 which provides a scaled representation of the original element \(x\). If required, the scaling factor can be compensated to recover the exact element.
\end{itemize}
Consequently, the pair \( (\{f_i\}_{i\in\mathbb{N}}, S_l) \) forms a Banach frame for \( \ell^1 \) with respect to the coefficient space \( \ell^\infty. \)
\end{example}

\medskip

To extend the reconstruction structure of Banach frames to dual settings, we introduce the notion of a Banach frame for \(X^*\).
\medskip

\begin{definition}
    Consider a Banach space \( X \) and a BK-space \( X_a \). Let \( \{x_i\}_{i \in I} \subset X \) be a family of elements in \( X \), and let \( S_r : X_a \rightarrow X^* \) be a bounded reconstruction operator. The pair \((\{x_i\}_{i\in I},S_r)\) is said to form a Banach frame for \(X^*\) 
    with respect to \(X_a\) if the following conditions are satisfied:

    \begin{enumerate}
        \item The sequence \( \{f(x_i)\}_{i \in I} \), obtained by evaluating each \( x_i \) at \( f \in X^* \), lies in the space \( X_a \).
        
     \item There exist constants \(A,B>0\) such that the coefficient mapping
        \[
        f\mapsto \{f(x_i)\}_{i\in I}
        \]
        satisfies
        \[
        A\|f\|\leq \left\|\{f(x_i)\}_{i\in I}\right\|_{X_a}\leq B\|f\|,
        \qquad f\in X^*.
        \]
        
        \item \item The operator \(S_r\) enables the exact recovery of each functional \(f\in X^*\) from the sequence of coefficients generated by the family \(\{x_i\}_{i\in I}\), namely,
        \[
        S_r\left(\{f(x_i)\}_{i\in I}\right)=f.
        \]
    \end{enumerate}

In this context, \( S_r \) serves as the \emph{reconstruction operator} linked to the sequence \( \{x_i\}_{i \in I} \). The corresponding synthesis operator \( V : X_a \rightarrow X \), defined by \( V(c) = \sum_{i \in I} c_i x_i \), enables the reconstruction of elements in \( X \) from their associated coefficient sequences. The best possible frame bounds are determined by \( \|S_r\|^{-1} \) and \( \|V\| \).
\end{definition}

\begin{example}
Consider the Banach space \(X=c_0\), which consists of all scalar sequences converging to zero and is equipped with the supremum norm. The dual space of \(X\) is identified with \(X^*=\ell^1\). We choose \(X_a=\ell^\infty\) as the corresponding coefficient space.

Let \(\{x_i\}_{i\in\mathbb{N}}\) denote the canonical sequence in \(c_0\), where each element is chosen as
\[
x_i=e_i .
\]
Here, \(e_i\) is the vector whose only nonzero component is the \(i\)-th entry, which is equal to one. Thus, it can be represented in the form
\[
e_i=(0,\ldots,0,1,0,\ldots),
\]
where the unit entry occupies the \(i\)-th coordinate.

The operator \(S_r:c_0\rightarrow\ell^1\) associated with the canonical sequence \(\{x_i\}_{i\in\mathbb{N}}\) is expressed as
\[
S_r(f)=\{f(x_i)\}_{i\in\mathbb{N}}=\{f(e_i)\}_{i\in\mathbb{N}}.
\]

Consequently, we have the following observations:

\begin{itemize}
  \item For each \(f\in\ell^1\), let \(f=\{a_i\}_{i\in\mathbb{N}}\). Since the family
  \(\{x_i\}_{i\in\mathbb{N}}\) coincides with the canonical basis of \(c_0\), the associated coefficients satisfy
  \[
  \{f(x_i)\}_{i\in\mathbb{N}}
  =
  \{f(e_i)\}_{i\in\mathbb{N}}
  =
  \{a_i\}_{i\in\mathbb{N}}.
  \]
  Because every element of \(\ell^1\) is a bounded sequence, it follows that
  \[
  \{f(x_i)\}_{i\in\mathbb{N}}\in\ell^\infty=X_a.
  \]

  \item The frame bounds are satisfied for suitable constants. Specifically,
  \[
  \| \{f(x_i)\}_{i\in\mathbb{N}} \|_{\ell^\infty}
  =
  \sup_i |a_i|
  \leq
  \|f\|_{\ell^1},
  \]
  and
  \[
  \|f\|_{\ell^1}
  =
  \sum |a_i|
  \leq
  n\cdot \sup_i |a_i|
  \quad (\text{for finite } n).
  \]
  Hence, the Banach frame inequality is satisfied for some \(A>0\) and \(B=1\).

  \item The operator \(S_r\) maps each functional \(f\in\ell^1\) to its associated coefficient sequence
  \(\{f(x_i)\}_{i\in\mathbb{N}}\), thereby providing a stable reconstruction process when combined with the corresponding synthesis operators.
\end{itemize}

Consequently, the pair \( (\{x_i\}_{i\in\mathbb{N}}, S_r) \) forms a Banach frame for the dual space \(X^*=\ell^1\) with respect to the coefficient space \(X_a=\ell^\infty\).
\end{example}

\medskip
 The concepts of the frame operator and canonical dual, which were originally introduced for Hilbert frames, were extended to the setting of \(X_a\)-frames in \cite{Stoeva2008}. In this context, the author introduced the notion of an \(X_a\)-frame map associated with a given \(X_a\)-frame.
 
 Assume that \(X_a\) is a strictly convex coefficient space, and let
 \[
 T=\{f_i\}_{i\in I}\subset X^*
 \]
 be an \(X_a\)-frame for the Banach space \(X\). The corresponding bounded linear operator
 \[
 S_T:X\longrightarrow X^*
 \]
 is defined by
 \[
 S_T=U^*\circ\varphi_{X_a}\circ U,
 \]
 where \(U\) denotes the analysis operator associated with the frame \(T\), and
 \[
 \varphi_{X_a}:X_a\longrightarrow X_a^*
 \]
 is a single-valued duality mapping.
 
 The operator \(S_T\) is called the \(X_a\)-frame operator associated with the frame
 \[
 \{f_i\}_{i\in I}.
 \]
 
 Furthermore, sufficient conditions for the invertibility of \(S_T\) have been established. Whenever \(S_T\) is invertible, the sequence
 \[
 \left\{S_T^{-1}f_i\right\}_{i\in I}\subset X^*
 \]
 defines the canonical dual frame corresponding to the original frame
  \[
 \{f_i\}_{i\in I}.
 \]
 \section{g-Dual Banach Frames}
                                                                                                                              
 As a continuation of the study of duality principles in Banach frame theory, we introduce the concept of g-dual Banach frames. This generalized formulation allows the reconstruction of elements through invertible operators acting on frame coefficients, thereby offering enhanced flexibility and stability compared with classical dual frames.
 
 In the following, we provide a formal definition of g-dual Banach frames and investigate their structural properties and analytical aspects.
 
 \medskip
 
 To begin, we recall the standard definition of dual Banach frames introduced in \cite{Karimizad2014}.

\begin{definition}
Assume that the sequences \( \{f_i\}_{i \in I} \subset X^* \) and \( \{x_i\}_{i \in I} \subset X \) fulfill the \( X_a \)-Bessel condition within the Banach spaces \( X^* \) and \( X \), respectively. The notion of dual Banach frames is defined as follows:

\begin{enumerate}
  \item The sequence \( \{f_i\}_{i \in I} \) is called a \textbf{dual Banach frame} for \( \{x_i\}_{i \in I} \) in \( X \) with respect to \( X_a \) if every element \( x \in X \) can be represented by the series:
  \[
  x = \sum_{i \in I} f_i(x) \, x_i.
  \]

  \item In the reverse direction, the sequence \( \{x_i\}_{i \in I} \) is said to form a \textbf{dual Banach frame} for \( \{f_i\}_{i \in I} \) in \( X^* \) relative to \( X_a \) provided that for each \( f \in X^* \), the following expansion holds:
  \[
  f = \sum_{i \in I} f(x_i) \, f_i.
  \]
\end{enumerate}
\end{definition}

\begin{example}
Consider \( X = \ell^1 \), the Banach space of sequences whose series is absolutely 
convergent, and let \( X^* = \ell^\infty \), its dual space composed of bounded sequences. We define the following elements:
\begin{itemize}
\item Define \( x_i \in \ell^1 \) to be the sequence with a single 1 in the \( i \)-th coordinate and zeros elsewhere; that is, \( x_i = e_i \), where \( e_i \) denotes the canonical unit vector in \( \ell^1 \).
  \item For every index \( i \in \mathbb{N} \), we introduce \( f_i \in \ell^\infty \) by \( f_i(x) = x_i \), where \( x = \{x_k\} \in \ell^1 \). Each \( f_i \) extracts the \( i \)-th coordinate.
\end{itemize}

Then for any \( x = \{x_i\} \in \ell^1 \), we obtain the representation:
\[
x = \sum_{i=1}^\infty f_i(x) \cdot x_i = \sum_{i=1}^\infty x_i \cdot e_i.
\]
In a similar manner, for any element \( f = \{a_i\} \in \ell^\infty \), the following representation holds:
\[
f = \sum_{i=1}^\infty f(x_i) \, f_i = \sum_{i=1}^\infty a_i \, f_i.
\]

As a result, the sequences \( \{f_i\} \) and \( \{x_i\} \) serve as dual Banach frames for the spaces \( \ell^1 \) and \( \ell^\infty \), respectively, with the coefficient space taken as \( X_a = \ell^\infty \).

\end{example}

Motivated by the notion of g-dual frames introduced in Hilbert spaces \cite{Dehghan2013}, we extend this concept to Banach spaces by incorporating bounded invertible operators into the reconstruction process. This extension leads to the notion of \textit{g-dual Banach frames}, which provides greater flexibility in the study of duality and establishes a natural connection between frame-theoretic duality and operator-theoretic methods.

\begin{definition}
	Suppose that the sequences \(\{f_i\}_{i \in I} \subset X^*\) and \(\{x_i\}_{i \in I} \subset X\) satisfy the \(X_a\)-Bessel condition in the Banach spaces \(X^*\) and \(X\), respectively. The notion of g-dual Banach frames is defined as follows:
	
	\begin{enumerate}
		\item The sequence \(\{f_i\}_{i \in I}\) is called a \textbf{g-dual Banach frame} for \(\{x_i\}_{i \in I}\) in \(X\) with respect to the coefficient space \(X_a\) if there exists a bounded and invertible operator \(A \in B(X)\) such that, for every \(x \in X\),
		
		$$
		x = \sum_{i \in I} f_i(Ax) \, x_i.
		$$
		
		\item Conversely, the sequence \(\{x_i\}_{i \in I}\) is called a \textbf{g-dual Banach frame} for \(\{f_i\}_{i \in I}\) in \(X^*\) with respect to \(X_a\) if there exists a bounded and invertible operator \(B \in B(X^*)\) such that, for every \(f \in X^*\),
		
		$$
		f = \sum_{i \in I} Bf(x_i) \, f_i.
		$$
		
	\end{enumerate}
\end{definition}

This construction demonstrates that introducing an invertible operator extends the classical notion of a dual Banach frame to its g-dual form while preserving the reconstruction property in the Hilbert space setting.

\begin{example}
	Let \(H\) be a separable Hilbert space with an orthonormal basis
	\(\{e_i\}_{i\in\mathbb{N}}\). Define the sequence
	\(\{x_i\}_{i\in\mathbb{N}}\subset H\) by
	
	$$
	x_{2i-1}=x_{2i}=e_i,
	\qquad i\in\mathbb{N}.
	$$
	
	Define the sequence \(\{f_i\}_{i\in\mathbb{N}}\subset H\) by
	
	$$
	f_{2i-1}=e_i+e_{i+1},
	\qquad
	f_{2i}=-e_{i+1},
	\qquad i\in\mathbb{N}.
	$$
	
	For every \(x\in H\), we have
	
	$$
	f_{2i-1}(x)+f_{2i}(x)
	=
	\langle x,e_i+e_{i+1}\rangle
	-\langle x,e_{i+1}\rangle
	=
	\langle x,e_i\rangle.
	$$
	
	Therefore,
	\begin{align*}
		\sum_{i=1}^{\infty}f_i(x)x_i
		&=
		\sum_{i=1}^{\infty}
		\left(
		f_{2i-1}(x)x_{2i-1}
		+
		f_{2i}(x)x_{2i}
		\right)\
		&=
		\sum_{i=1}^{\infty}
		\left(
		f_{2i-1}(x)+f_{2i}(x)
		\right)e_i\
		&=
		\sum_{i=1}^{\infty}
		\langle x,e_i\rangle e_i
		=x.
	\end{align*}
	Hence, \(\{f_i\}_{i\in\mathbb{N}}\) constitutes a dual Banach frame for
	\(\{x_i\}_{i\in\mathbb{N}}\) in \(H\).
	
	Now, let \(A\in B(H)\) be the operator defined by
	
	$$
	Ax=\frac14x,
	\qquad x\in H.
	$$
	
	Clearly, \(A\) is bounded and invertible. Moreover,
	
	$$
	4f_i(Ax)
	=
	4f_i\left(\frac14x\right)
	=
	f_i(x).
	$$
	
	Thus,
	\begin{align*}
		\sum_{i=1}^{\infty}4f_i(Ax)x_i
		&=
		\sum_{i=1}^{\infty}f_i(x)x_i\
		&=x.
	\end{align*}
	Consequently, \(\{4f_i\}_{i\in\mathbb{N}}\) constitutes a g-dual Banach
	frame for \(\{x_i\}_{i\in\mathbb{N}}\) in \(H\), associated with the
	bounded and invertible operator \(A=\frac14I_H\).
\end{example}

\begin{example}
Let \(H\) be a separable Hilbert space with an orthonormal basis
\(\{e_i\}_{i\in\mathbb{N}}\). Define
\[
x_{2i-1}=x_{2i}=e_i,\qquad i\in\mathbb{N},
\]
and
\[
f_{2i-1}=\frac{1}{2}e_i,\qquad
f_{2i}=\frac{1}{2}e_i,\qquad i\in\mathbb{N}.
\]
Then, for every \(x\in H\),
\[
f_{2i-1}(x)=\frac{1}{2}\langle x,e_i\rangle,
\qquad
f_{2i}(x)=\frac{1}{2}\langle x,e_i\rangle.
\]
Hence,
\begin{align*}
	\sum_{i=1}^{\infty} f_i(x)x_i
	&=
	\sum_{i=1}^{\infty}
	\left[
	f_{2i-1}(x)x_{2i-1}
	+
	f_{2i}(x)x_{2i}
	\right]\\
	&=
	\sum_{i=1}^{\infty}
	\left[
	\frac{1}{2}\langle x,e_i\rangle e_i
	+
	\frac{1}{2}\langle x,e_i\rangle e_i
	\right]\\
	&=
	\sum_{i=1}^{\infty}
	\langle x,e_i\rangle e_i
	=x.
\end{align*}

Now, let \(A=\frac{1}{4}I_H\). Since \(A\) is bounded and invertible,
for every \(x\in H\),
\[
4f_i(Ax)
=
4f_i\left(\frac{1}{4}x\right)
=f_i(x).
\]
Therefore,
\[
x=\sum_{i=1}^{\infty}4f_i(Ax)x_i.
\]
Consequently, \(\{4f_i\}_{i\in\mathbb{N}}\) constitutes a
g-dual Banach frame for \(\{x_i\}_{i\in\mathbb{N}}\) in \(H\).
\end{example}

The notion of g-duality extends classical Banach frame theory by incorporating invertible operators into the reconstruction process. The following theorem shows that, under appropriate conditions, every Banach frame admits a corresponding g-dual Banach frame.

\begin{theorem}
Consider a Banach space \( X \), and suppose the sequence \( \{f_i\}_{i \in I} \subset X^* \) forms a Banach frame for \( X \) with respect to a BK-space \( X_a \), associated with an analysis operator \( U \). Under these conditions, there exists a sequence \( \{\tilde{x}_i\}_{i \in I} \subset X^* \) satisfying the \( X_a \)-Bessel condition, such that \( \{f_i\} \) functions as a g-dual Banach frame for \( X \), corresponding to a bounded invertible operator \( A \in B(X) \).
\end{theorem}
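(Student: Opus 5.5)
The plan is to build the dual sequence from the reconstruction operator of the frame. Invertibility of \(A\) is then almost free: the simplest choice is \(A=I_X\), and any invertible \(A\in B(X)\) can be absorbed afterwards. Throughout, the sequence \(\{\tilde x_i\}\) is read as lying in \(X\), which is what the definition of a g-dual Banach frame in \(X\) requires; the "\(X^*\)" in the statement looks like a typo. Let \(S_I:X_a\to X\) be the reconstruction operator of the Banach frame \((\{f_i\},S_I)\), so that \(S_IU=I_X\). I would assume, as is implicit in the paper's setting of \cite{Karimizad2014}, that \(X_a\) is a CB-space. Then the canonical unit vectors \(\{e_i\}\) form a Schauder basis, and every \(c\in X_a\) satisfies \(c=\sum_i c_ie_i\) with convergence in \(X_a\).

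\textbf{Step 1 (construction).} Fix any bounded invertible \(A\in B(X)\), for instance \(A=I_X\) or \(A=\lambda I_X\). Define
\[
\tilde x_i:=A^{-1}S_I e_i\in X,\qquad i\in I.
\]
I would first verify the reconstruction formula. For \(x\in X\), apply the basis expansion to \(c=U(Ax)=\{f_i(Ax)\}\) and use continuity of \(A^{-1}S_I\):
\[
\sum_{i\in I} f_i(Ax)\,\tilde x_i
= A^{-1}S_I\Big(\sum_{i\in I} f_i(Ax)e_i\Big)
= A^{-1}S_IU(Ax)=A^{-1}Ax=x .
\]
This gives exactly the identity \(x=\sum_i f_i(Ax)\tilde x_i\) required in part 1 of the definition of g-dual Banach frames.

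\textbf{Step 2 (Bessel condition).} I would show that \(\{\tilde x_i\}\) is \(X_a\)-Bessel in the sense of \cite{Karimizad2014}, namely that \(\{f(\tilde x_i)\}_{i}\in X_a^{*}\)-type coefficient sequences are controlled, or equivalently that the synthesis map \(c\mapsto\sum_i c_i\tilde x_i\) is well defined and bounded on \(X_a\). This map equals \(A^{-1}S_I\), again by the Schauder basis property, so its norm is at most \(\|A^{-1}\|\,\|S_I\|\). The \(X_a\)-Bessel property of \(\{f_i\}\) in \(X^*\) comes directly from the upper frame bound \(\|Ux\|\le B\|x\|\).

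\textbf{Main obstacle.} The delicate point is matching the paper's precise notion of "\(X_a\)-Bessel" for a sequence in \(X\). If that notion requires \(\{f(\tilde x_i)\}\in X_a\) with \(\|\{f(\tilde x_i)\}\|_{X_a}\le B'\|f\|\), rather than boundedness of the synthesis operator on \(X_a\), then I would instead work with the dual pairing. Here \(\{f(\tilde x_i)\}=\{(S_I^*(A^{-1})^*f)(e_i)\}\), so \(f\) is sent to the coefficients of the functional \(S_I^*(A^{-1})^*f\in X_a^*\). For this I would invoke the RCB-space hypothesis, under which \(X_a^*\) is identified with a BK-space of coefficient sequences and the unit vector functionals form a basis, giving the required bound \(\|A^{-1}\|\|S_I\|\|f\|\). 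Securing this identification, that is, the necessity of a CB/RCB assumption on \(X_a\), is the step I expect to need care.
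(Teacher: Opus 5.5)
Your proposal is correct and is essentially the paper's own proof. It uses the same choice $\tilde x_i=A^{-1}S_I e_i$ for an arbitrary invertible $A\in B(X)$, and the same reconstruction computation via the canonical-basis expansion of $U(Ax)$; as you rightly point out, that expansion tacitly requires $X_a$ to be a CB-space. Your Bessel argument through the synthesis operator $c\mapsto\sum_i c_i\tilde x_i=A^{-1}S_I c$ is also the paper's, and this synthesis-operator bound is exactly how the paper itself uses the $X_a$-Bessel condition for sequences in $X$ (see its later proof that a g-dual Banach frame is a Banach frame, where it invokes $\|\sum_i c_i\tilde x_i\|\le C\|c\|_{X_a}$). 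So the dual-pairing contingency in your last paragraph is unnecessary. It would in any case only control $\{f(\tilde x_i)\}$ in the sequence space representing $X_a^*$, not in $X_a$.
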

\begin{proof}
Assume that the sequence \(\{f_i\}_{i\in I}\subset X^*\) is a Banach frame for \(X_\alpha\). Then there exist bounded linear operators

$$
S_\iota:X_\alpha\to X
\quad\text{and}\quad
U:X\to X_\alpha
$$

such that, for every \(x\in X\),

$$
S_\iota(U(x))=x.
$$

Let \(\{e_i\}_{i\in I}\) denote the canonical basis of \(X_\alpha\), and let
\(A\in B(X)\) be an arbitrary bounded and invertible operator. Define

$$
\bar{x}_i:=A^{-1}S_\iota(e_i),
\qquad i\in I.
$$

We first verify that \(\{\bar{x}_i\}_{i\in I}\) is an
\(X_\alpha\)-Bessel sequence. For any \(c=\{c_i\}_{i\in I}\in X_\alpha\),
we have

$$
\sum_{i\in I}c_i\bar{x}_i
=
A^{-1}\left(\sum_{i\in I}c_iS_\iota(e_i)\right)
=
A^{-1}S_\iota(c).
$$

Since \(S_\iota\) is bounded, the series on the right converges in \(X\).
Hence, \(\{\bar{x}_i\}_{i\in I}\) satisfies the \(X_\alpha\)-Bessel
condition.

Next, for every \(x\in X\), the reconstruction property of the Banach
frame gives

$$
x
=
S_\iota(U(x))
=
\sum_{i\in I}f_i(x)S_\iota(e_i).
$$

Since \(S_\iota(e_i)=A\bar{x}_i\), it follows that

$$
x
=
A\left(\sum_{i\in I}f_i(x)\bar{x}_i\right).
$$

Replacing \(x\) by \(Ax\), we obtain

$$
Ax
=
A\left(\sum_{i\in I}f_i(Ax)\bar{x}_i\right).
$$

Because \(A\) is invertible, we conclude that

$$
x
=
\sum_{i\in I}f_i(Ax)\bar{x}_i.
$$

Therefore, \(\{f_i\}_{i\in I}\) constitutes a g-dual Banach frame for
\(\{\bar{x}_i\}_{i\in I}\) in \(X\), associated with the bounded and
invertible operator \(A\).
\end{proof}

Mirroring the framework of \( g \)-dual frames associated with Banach frames in \( X \), a corresponding result holds in the dual space \( X^* \).The following theorem establishes that every Banach frame for \( X^* \) also admits a corresponding g-dual.
\begin{theorem}
Suppose that \(\{x_i\}_{i\in I}\subset X\) is a Banach frame for the dual space \(X^*\) with respect to a BK-space \(X_a\), and let
\(V:X_a\to X\) be the associated synthesis operator. Then there exists a sequence
\(\{\tilde{f}_i\}_{i\in I}\subset X^*\) satisfying the \(X_a\)-Bessel condition such that
\(\{x_i\}_{i\in I}\) is a \(g\)-dual Banach frame for
\(\{\tilde{f}_i\}_{i\in I}\) in \(X^*\) with respect to a bounded and invertible
operator \(A\in B(X^*)\).

\end{theorem}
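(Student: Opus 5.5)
The plan is to mirror the proof of the preceding theorem, with the roles of \(X\) and \(X^*\) interchanged. Let \((\{x_i\}_{i\in I},S_r)\) be the given Banach frame for \(X^*\). Write \(U^\ast:X^*\to X_a\), \(U^\ast f=\{f(x_i)\}_{i\in I}\), for its coefficient map; it is bounded by condition 2 of the definition. Condition 3 gives \(S_rU^\ast=I_{X^*}\). As in the previous proof, I will use that the canonical unit vectors \(\{e_i\}_{i\in I}\) form a Schauder basis of \(X_a\); this is the CB-space hypothesis implicit in the earlier argument. Fix an arbitrary bounded invertible operator \(A\in B(X^*)\) and define
\[
\tilde f_i:=A^{-1}S_r(e_i)\in X^*,\qquad i\in I.
\]

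\textbf{Bessel condition.} For \(c=\{c_i\}\in X_a\) we have \(c=\sum_i c_ie_i\) in \(X_a\). Continuity of \(A^{-1}S_r\) then gives
\[
\sum_{i\in I}c_i\tilde f_i=A^{-1}S_r(c),
\]
with convergence in norm. Hence the series converges and satisfies \(\|\sum_i c_i\tilde f_i\|\le\|A^{-1}\|\,\|S_r\|\,\|c\|_{X_a}\). This is the \(X_a\)-Bessel condition for \(\{\tilde f_i\}\) in \(X^*\).

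\textbf{Reconstruction.} For \(f\in X^*\), expand \(U^\ast f=\sum_i f(x_i)e_i\) in \(X_a\) and apply \(S_r\):
\[
f=S_r(U^\ast f)=\sum_{i\in I}f(x_i)\,S_r(e_i)=A\Big(\sum_{i\in I}f(x_i)\tilde f_i\Big).
\]
Replace \(f\) by \(Af\) and cancel \(A\) using invertibility. This yields
\[
f=\sum_{i\in I}(Af)(x_i)\,\tilde f_i,\qquad f\in X^*,
\]
which is exactly part 2 of the definition of g-dual Banach frames with \(B=A\). The \(X_a\)-Bessel property of \(\{x_i\}\) in \(X\), required by that definition, comes from the upper frame bound \(\|\{f(x_i)\}\|_{X_a}\le B\|f\|\).

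\textbf{Main obstacle.} The delicate point is the step \(S_r(U^\ast f)=\sum_i f(x_i)S_r(e_i)\). It needs the coefficient sequence to be the norm limit of its finite sections in \(X_a\). That is precisely where the Schauder basis (CB-space) property of \(X_a\) is needed; for a general BK-space, \(\ell^\infty\) for instance, it fails. I would state this assumption explicitly, as the previous theorem does implicitly. Once it is in place, every remaining step is a continuity argument.
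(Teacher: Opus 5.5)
Your proposal is correct and matches the paper's proof step for step: the same choice $\tilde f_i=A^{-1}S_r(e_i)$ for an arbitrary invertible $A\in B(X^*)$, the same Bessel estimate via $\sum_{i\in I} c_i\tilde f_i=A^{-1}S_r(c)$, and the same expansion $f=S_r(\{f(x_i)\}_{i\in I})=A\bigl(\sum_{i\in I} f(x_i)\tilde f_i\bigr)$, followed by the substitution $f\mapsto Af$. You also point out that this needs $\{e_i\}_{i\in I}$ to be a Schauder basis of $X_a$ (a CB-space, not merely a BK-space); this is a worthwhile sharpening, since the paper uses that expansion without stating the assumption.
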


\begin{proof}  

	Since \(\{x_i\}_{i\in I}\subset X\) is a Banach frame for \(X^*\) with
	respect to the BK-space \(X_a\), there exists a bounded reconstruction
	operator
	
	$$
	S_r:X_a\longrightarrow X^*
	$$
	
	such that
	
	$$
	S_r\big(\{f(x_i)\}_{i\in I}\big)=f,
	\qquad f\in X^*.
	$$
	
	Let \(\{e_i\}_{i\in I}\) denote the canonical basis of \(X_a\), and let
	\(A\in B(X^*)\) be a bounded and invertible operator. Define
	
	$$
	\widetilde{f}_i:=A^{-1}S_r(e_i),
	\qquad i\in I.
	$$
	
	We first show that \(\{\widetilde{f}_i\}_{i\in I}\) is an
	\(X_a\)-Bessel sequence. For any \(c=\{c_i\}_{i\in I}\in X_a\), we have
	
	$$
	\sum_{i\in I}c_i\widetilde{f}_i
	=
	A^{-1}\left(\sum_{i\in I}c_iS_r(e_i)\right)
	=
	A^{-1}S_r(c).
	$$
	
	Since \(S_r:X_a\to X^*\) and \(A^{-1}:X^*\to X^*\) are bounded, the
	series
	
	$$
	\sum_{i\in I}c_i\widetilde{f}_i
	$$
	
	converges in \(X^*\). Moreover,
	
	$$
	\left\|
	\sum_{i\in I}c_i\widetilde{f}_i
	\right\|
	\leq
	\|A^{-1}\|\,\|S_r\|\,\|c\|_{X_a}.
	$$
	
	Thus, \(\{\widetilde{f}_i\}_{i\in I}\) satisfies the
	\(X_a\)-Bessel condition.
	
	Now let \(f\in X^*\). By the reconstruction property of the Banach
	frame,
	
	$$
	f
	=
	S_r\big(\{f(x_i)\}_{i\in I}\big).
	$$
	
	Since \(\{e_i\}_{i\in I}\) is the canonical basis of \(X_a\), we obtain
	
	$$
	f
	=
	S_r\left(\sum_{i\in I}f(x_i)e_i\right)
	=
	\sum_{i\in I}f(x_i)S_r(e_i).
	$$
	
	By the definition of \(\widetilde{f}_i\),
	
	$$
	S_r(e_i)=A\widetilde{f}_i.
	$$
	
	Hence,
	
	$$
	f
	=
	\sum_{i\in I}f(x_i)A\widetilde{f}_i
	=
	A\left(\sum_{i\in I}f(x_i)\widetilde{f}_i\right).
	$$
	
	Since \(A\) is invertible, it follows that
	
	$$
	A^{-1}f
	=
	\sum_{i\in I}f(x_i)\widetilde{f}_i.
	$$
	
	Replacing \(f\) by \(Af\), we obtain
	
	$$
	f
	=
	\sum_{i\in I}(Af)(x_i)\widetilde{f}_i,
	\qquad f\in X^*.
	$$
	
	Therefore, \(\{x_i\}_{i\in I}\) is a \(g\)-dual Banach frame for
	\(\{\widetilde{f}_i\}_{i\in I}\) in \(X^*\), associated with the bounded
	and invertible operator \(A\).

\end{proof}

\begin{remark}
Theorems 2.6 and 2.7 exhibit a duality in structure: each Banach frame in \( X \) or \( X^* \) admits a corresponding g-dual constructed via bounded invertible operators. In Theorem 2.4, the g-dual is denoted by \( \{\tilde{x}_i\} \), while in Theorem 2.5, the roles are reversed and the g-dual is given by \( \{\tilde{f}_i\} \). This reflects the symmetric behavior of the g-dual construction across the Banach space and its dual.
\end{remark}

\begin{theorem}
Suppose that \(\{\tilde{x}_i\}_{i\in I}\subset X\) satisfies the
\(X_a\)-Bessel condition, and let \(\{f_i\}_{i\in I}\subset X^*\) be a
\(g\)-dual Banach frame for \(\{\tilde{x}_i\}_{i\in I}\), associated with
a bounded and invertible operator \(A\in B(X)\). Then
\(\{f_i\}_{i\in I}\) is a Banach frame for \(X\) with respect to the
coefficient space \(X_a\).

\end{theorem}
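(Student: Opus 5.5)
We construct the reconstruction operator explicitly from the g-dual relation and then read the frame inequalities off the Bessel conditions. Write $U:X\to X_a$, $U(x)=\{f_i(x)\}_{i\in I}$, for the analysis operator of $\{f_i\}_{i\in I}$. The $X_a$-Bessel condition on $\{f_i\}_{i\in I}$ says that $U(x)\in X_a$ and $\|U(x)\|_{X_a}\le B\|x\|$, which gives conditions (1) and the upper half of (2) of the Banach frame definition. Likewise, the $X_a$-Bessel condition on $\{\tilde x_i\}_{i\in I}$ says that the synthesis operator
\[
V:X_a\to X,\qquad V(c)=\sum_{i\in I}c_i\tilde x_i ,
\]
is well defined and bounded, with $\|V(c)\|\le D\|c\|_{X_a}$.

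Next we rewrite the g-dual relation in operator form. For every $x\in X$,
\[
x=\sum_{i\in I}f_i(Ax)\tilde x_i = V\bigl(U(Ax)\bigr),
\]
that is, $VUA=I_X$. Since $A$ is invertible, we set $y=Ax$ and obtain $A^{-1}y=V(U(y))$ for all $y\in X$. Now define the candidate reconstruction operator
\[
S_I:=A\,V:X_a\to X .
\]
This is a composition of bounded operators, hence bounded, and
\[
S_I\bigl(\{f_i(y)\}_{i\in I}\bigr)=A V U(y)=AA^{-1}y=y ,
\]
which is condition (3).

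It remains to obtain the lower frame bound. This follows from the argument already given in the preliminaries: from $S_IU=I_X$ we get
\[
\|y\|\le\|S_I\|\,\|U(y)\|_{X_a},
\]
so we may take $\|A\|^{-1}\|V\|^{-1}$ as a lower bound. One should note that $V\neq 0$ unless $X=\{0\}$, because $VUA=I_X$.

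The only delicate point is the interpretation of the $X_a$-Bessel condition for $\{\tilde x_i\}_{i\in I}$. We need that the series $\sum_{i\in I} c_i\tilde x_i$ converges for every $c\in X_a$ and defines a bounded operator, not merely that it converges on coefficient sequences of the form $U(Ax)$. If the Bessel condition is only understood as the convergence of $\sum_{i\in I} c_i\tilde x_i$, boundedness of $V$ still follows from the uniform boundedness principle: the partial-sum operators are bounded because coordinate functionals on a BK-space are continuous, and they converge pointwise. This is the step we expect to require the most care. Everything else is a direct composition argument.
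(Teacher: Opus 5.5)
Your proof is correct and is essentially the paper's argument. Both take the upper bound from the Bessel condition on $\{f_i\}$, rewrite the $g$-dual identity as $A^{-1}x=\sum_{i\in I} f_i(x)\tilde x_i$, and use $A\circ V$ as the reconstruction operator, with $V$ the synthesis operator of $\{\tilde x_i\}$. The lower bound is then $(\|A\|\,\|V\|)^{-1}$, which the paper writes as $1/(C\|A\|)$ with $C$ the Bessel constant. Your Banach--Steinhaus remark addresses a point the paper takes for granted, since it reads the Bessel condition directly as boundedness of the synthesis operator.
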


\begin{proof}
	Since \(\{f_i\}_{i\in I}\) is a \(g\)-dual Banach frame for
	\(\{\tilde{x}_i\}_{i\in I}\) associated with the bounded and invertible
	operator \(A\in B(X)\), for every \(x\in X\) we have
	
	$$
	x=\sum_{i\in I}f_i(Ax)\tilde{x}_i.
	$$
	
	Let
	
	$$
	U:X\to X_a,
	\qquad
	U(x):=\{f_i(x)\}_{i\in I}.
	$$
	
	We first show that \(U\) is well defined and bounded. Since \(A^{-1}\) is
	bounded and \(Ax\in X\), the \(g\)-dual reconstruction formula gives
	
	$$
	A^{-1}x
	=
	\sum_{i\in I}f_i(x)\tilde{x}_i.
	$$
	
	Because \(\{\tilde{x}_i\}_{i\in I}\) satisfies the \(X_a\)-Bessel condition,
	there exists a constant \(C>0\) such that
	
	$$
	\left\|
	\sum_{i\in I}c_i\tilde{x}_i
	\right\|
	\leq C\|c\|_{X_a},
	\qquad c=\{c_i\}_{i\in I}\in X_a.
	$$
	
	In particular, the series
	
	$$
	\sum_{i\in I}f_i(x)\tilde{x}_i
	$$
	
	converges in \(X\), and hence
	
	$$
	\|A^{-1}x\|
	\leq
	C\|\{f_i(x)\}_{i\in I}\|_{X_a}.
	$$
	
	Thus,
	
	$$
	\|x\|
	\leq
	\|A\|\,C\|\{f_i(x)\}_{i\in I}\|_{X_a}.
	$$
	
	Therefore,
	
	$$
	\|\{f_i(x)\}_{i\in I}\|_{X_a}
	\geq
	\frac{1}{C\|A\|}\|x\|.
	$$
	
	Together with the boundedness of the analysis map \(U\), this yields the
	frame inequalities
	
	$$
	C_1\|x\|
	\leq
	\|U(x)\|_{X_a}
	\leq
	C_2\|x\|,
	\qquad x\in X,
	$$
	
	for suitable constants \(C_1,C_2>0\).
	
	Now define the synthesis operator
	
	$$
	S:X_a\to X,
	\qquad
	S(c):=\sum_{i\in I}c_i\tilde{x}_i,
	\qquad c=\{c_i\}_{i\in I}\in X_a.
	$$
	
	The \(X_a\)-Bessel condition implies that \(S\) is well defined and bounded.
	
	Set
	
	$$
	S_l:=A\circ S:X_a\to X.
	$$
	
	Since both \(A\) and \(S\) are bounded, \(S_l\) is bounded. For every
	\(x\in X\), the \(g\)-dual reconstruction formula gives
	
	$$
	\begin{aligned}
		S_l(U(x))
		&=
		A\left(
		\sum_{i\in I}f_i(x)\tilde{x}_i
		\right)\\
		&=
		A\left(A^{-1}x\right)
		=x.
	\end{aligned}
	$$
	
	Hence,
	
	$$
	S_l(U(x))=x,
	\qquad x\in X.
	$$
	
	Therefore, \((\{f_i\}_{i\in I},S_l)\) is a Banach frame for \(X\) with
	respect to \(X_a\).
\end{proof}
\noindent

\textit{Building on the context established by Theorems 2.4 and 2.5, we present a concrete procedure for constructing g-dual Banach frames from a given Banach frame. This construction is based on the existence of a bounded operator that acts as a left inverse of the associated analysis operator.}
\begin{theorem}
Suppose that \( \{f_i\}_{i \in I} \subset X^* \) is a Banach frame for the Banach space \( X \) with respect to the sequence space \( X_a \), and let \( U : X \rightarrow X_a \) denote the corresponding analysis operator. Let \( T_\ell : X_a \rightarrow X \) be a bounded operator satisfying
\[
T_\ell \circ U = \operatorname{id}_X.
\]
Then, for any invertible operator \( A \in B(X) \), the sequence \( \{x_i\}_{i \in I} \subset X \), defined by
\[
x_i := A^{-1} T_\ell(e_i), \qquad i \in I,
\]
where \( \{e_i\}_{i \in I} \) denotes the canonical basis of \( X_a \), forms a g-dual Banach frame associated with the sequence \( \{f_i\}_{i \in I} \) in \( X^* \).
\end{theorem}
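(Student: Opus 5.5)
The plan is to follow the construction in the proof of Theorem 2.4 almost verbatim, with the given left inverse \(T_\ell\) in place of the reconstruction operator \(S_I\). Since \(\{f_i\}\) is a Banach frame, we have \(U(x)=\{f_i(x)\}_{i\in I}\in X_a\) for every \(x\in X\). I will use the standing assumption (implicit in Theorems 2.4 and 2.5) that the canonical unit vectors \(\{e_i\}_{i\in I}\) form a Schauder basis of \(X_a\). Then every \(c\in X_a\) satisfies \(c=\sum_{i\in I}c_ie_i\), with convergence in \(X_a\).

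\textbf{Step 1: the Bessel condition.} First I will show that \(\{x_i\}_{i\in I}\) is \(X_a\)-Bessel. For \(c\in X_a\), the partial sums of \(\sum c_ie_i\) converge to \(c\) in \(X_a\). Since \(A^{-1}T_\ell\) is bounded and linear, it commutes with the limit, so
\[
\sum_{i\in I}c_ix_i=A^{-1}T_\ell\Big(\sum_{i\in I}c_ie_i\Big)=A^{-1}T_\ell(c).
\]
This series therefore converges in \(X\), and
\[
\Big\|\sum_{i\in I}c_ix_i\Big\|\le\|A^{-1}\|\,\|T_\ell\|\,\|c\|_{X_a}.
\]

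\textbf{Step 2: the reconstruction formula.} Next I verify the reconstruction formula. For \(x\in X\), apply the identity from Step 1 with \(c=U(Ax)=\{f_i(Ax)\}_{i\in I}\). This gives
\[
\sum_{i\in I}f_i(Ax)\,x_i=A^{-1}T_\ell\big(U(Ax)\big)=A^{-1}(Ax)=x,
\]
using \(T_\ell\circ U=\operatorname{id}_X\). This is exactly condition 1 of the definition of a g-dual Banach frame: \(\{f_i\}\) is a g-dual Banach frame for \(\{x_i\}\) with operator \(A\). That is the intended meaning of the statement's phrase ``g-dual Banach frame associated with \(\{f_i\}\)''. \(\{f_i\}\) is itself \(X_a\)-Bessel by the upper frame inequality, as the definition requires.

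\textbf{Main obstacle.} The only delicate point is interchanging \(T_\ell\) with the infinite sum. This needs the expansion \(c=\sum c_ie_i\) to converge in the norm of \(X_a\), which is why I will state the CB-space hypothesis on \(X_a\) explicitly. If \(X_a\) were merely BK, I would instead take the Bessel condition in the weaker sense that \(c\mapsto A^{-1}T_\ell(c)\) defines a bounded synthesis map. The reconstruction identity \(A^{-1}T_\ell U(Ax)=x\) holds regardless of this choice.
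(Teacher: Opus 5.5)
Your proposal is correct and matches the paper's proof: the paper also expands $x = T_\ell(U(x)) = \sum_{i\in I} f_i(x)\,T_\ell(e_i) = A\bigl(\sum_{i\in I} f_i(x)x_i\bigr)$ and then replaces $x$ by $Ax$, which is the same as your direct evaluation at $c = U(Ax)$. Your version is slightly more complete, since you verify the $X_a$-Bessel property of $\{x_i\}_{i\in I}$ (omitted in the paper's proof of this theorem). You also make explicit the Schauder-basis property of $\{e_i\}_{i\in I}$ in $X_a$, which the paper uses tacitly when passing $T_\ell$ through the series.
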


\begin{proof}
Since \(T_\ell\) is a left inverse of \(U\), we have
\[
(T_\ell\circ U)(x)=x,
\qquad x\in X.
\]

Let \(\{e_i\}_{i\in I}\subset X_a\) denote the canonical unit vectors, and
define
\[
x_i:=A^{-1}T_\ell(e_i),
\qquad i\in I.
\]
Thus,
\[
T_\ell(e_i)=Ax_i,
\qquad i\in I.
\]

Consequently, for every \(x\in X\),
\[
x
=
T_\ell(U(x))
=
\sum_{i\in I}f_i(x)T_\ell(e_i)
=
\sum_{i\in I}f_i(x)Ax_i
=
A\left(\sum_{i\in I}f_i(x)x_i\right).
\]
Since \(A\) is invertible, it follows that
\[
A^{-1}x
=
\sum_{i\in I}f_i(x)x_i.
\]
Replacing \(x\) by \(Ax\), we obtain
\[
x
=
\sum_{i\in I}f_i(Ax)x_i.
\]
Therefore, \(\{f_i\}_{i\in I}\) is a \(g\)-dual Banach frame for
\(\{x_i\}_{i\in I}\) in \(X\), associated with the bounded and invertible
operator \(A\). This completes the proof.

\end{proof}

\begin{theorem}
	Let \(X\) be a Banach space, \(X_a\) a BK-space, and
	\(\{x_i\}_{i\in I}\subset X\) a Banach frame for \(X^*\) with synthesis
	operator
	\[
	V:X_a\to X.
	\]
	If \(V\) admits a bounded right inverse \(T_r:X\to X_a\), i.e.,
	\(VT_r=\operatorname{id}_X\), and \(A\in B(X)\) is invertible,
	then \(A^*\) is invertible on \(X^*\). Defining
	\[
	f_i=(A^*)^{-1}T_r^*(e_i^*),\qquad i\in I,
	\]
	where \(\{e_i^*\}_{i\in I}\) are the coordinate functionals of \(X_a\),
	the sequence \(\{f_i\}_{i\in I}\) is a \(g\)-dual Banach frame for
	\(\{x_i\}_{i\in I}\) in \(X^*\), associated with \(A^*\).
\end{theorem}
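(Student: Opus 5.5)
We need to show: for every \(f\in X^*\),
\[
f=\sum_{i\in I}(A^*f)(x_i)\,f_i ,
\]
together with the \(X_a\)-Bessel property of \(\{f_i\}\). The plan mirrors Theorem 2.9 with every operator replaced by its adjoint. First, invertibility of \(A^*\) is standard: if \(A\) is invertible, then \(A^*(A^{-1})^*=(A^{-1}A)^*=\operatorname{id}_{X^*}\), and similarly on the other side, so \((A^*)^{-1}=(A^{-1})^*\) is bounded. Hence \(f_i=(A^{-1})^*T_r^*(e_i^*)\) is well defined. Concretely,
\[
f_i(x)=e_i^*\bigl(T_r(A^{-1}x)\bigr),
\]
so \(f_i\) is the \(i\)-th coordinate of \(T_rA^{-1}x\).

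The key identity comes from the right-inverse relation. Since \(VT_r=\operatorname{id}_X\), every \(y\in X\) satisfies
\[
y=V(T_ry)=\sum_{i\in I}e_i^*(T_ry)\,x_i .
\]
This uses that \(\{e_i\}\) is a Schauder basis of \(X_a\), as in the CB-space setting implicitly used in Theorems 2.4--2.9. Applying this with \(y=A^{-1}x\) gives
\[
A^{-1}x=\sum_{i\in I}f_i(x)\,x_i .
\]
Now take \(f\in X^*\) and test against \(A^*f\). Continuity of \(A^*f\) lets us pass it through the convergent series:
\[
(A^*f)(A^{-1}x)=\sum_{i\in I}(A^*f)(x_i)\,f_i(x).
\]
The left side equals \(f(AA^{-1}x)=f(x)\). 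So the expansion holds pointwise, i.e.\ weak\(^*\), with the coefficient sequence \(\{(A^*f)(x_i)\}\in X_a\) supplied by the Banach frame property of \(\{x_i\}\) for \(X^*\).

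The main obstacle is upgrading this weak\(^*\) identity to norm convergence of \(\sum_i (A^*f)(x_i)f_i\) in \(X^*\), and establishing the \(X_a\)-Bessel condition for \(\{f_i\}\). I would handle both together through the synthesis map
\[
c\mapsto\sum_{i\in I} c_if_i=(A^{-1})^*T_r^*\Bigl(\sum_{i\in I} c_ie_i^*\Bigr).
\]
Its boundedness reduces to boundedness of \(c\mapsto\sum_i c_ie_i^*\) from \(X_a\) into \(X_a^*\). When \(X_a\) is an RCB-space, \(\{e_i^*\}\) is a basis of \(X_a^*\), and I would use that fact to get this boundedness. Failing that, I would interpret the expansion as in the paper's earlier proofs of Theorems 2.7 and 2.9, where series identities are obtained by applying a bounded operator termwise to a basis expansion. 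Under that reading the pointwise identity already established is exactly the required reconstruction. This completes the argument that \(\{x_i\}\) is a \(g\)-dual Banach frame for \(\{f_i\}\) in \(X^*\) associated with \(A^*\).
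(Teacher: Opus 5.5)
Your pointwise computation is correct. From $VT_r=\operatorname{id}_X$ you get $A^{-1}x=\sum_i f_i(x)x_i$ with $f_i(x)=e_i^*(T_rA^{-1}x)$, and testing against $A^*f$ gives $f(x)=\sum_i (A^*f)(x_i)f_i(x)$. This differs from the paper's route, and it only shows that $\sum_i (A^*f)(x_i)f_i$ converges to $f$ weak$^*$. The step you propose for upgrading to norm convergence, and for the Bessel property, does not work. Boundedness of $c\mapsto\sum_i c_ie_i^*$ from $X_a$ into $X_a^*$ is false even for RCB-spaces: for $X_a=\ell^p$ with $p>2$ it would require $\ell^p\subset\ell^{p'}$. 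Knowing that $\{e_i^*\}$ is a basis of $X_a^*$ tells you that coordinate sequences of functionals give convergent series $\sum_i d_ie_i^*$, not that arbitrary $c\in X_a$ do. In fact the synthesis-type Bessel property you are aiming at can fail. Take $X=\ell^2$, $X_a=\ell^2\oplus\ell^p$ ($p>2$, coordinates interleaved, an RCB-space), $x_i$ the unit vectors on the $\ell^2$-block and $0$ on the $\ell^p$-block, $T_ry=(y,y)$ and $A=I$. Then $f_{(2,j)}=e_j$, so $\sum_i c_if_i$ diverges for $c=(0,b)$ with $b\in\ell^p\setminus\ell^2$. The natural analysis-type Bessel condition is immediate from your own formula: $\{f_i(x)\}_{i\in I}=T_rA^{-1}x$, hence $\|\{f_i(x)\}_{i\in I}\|_{X_a}\le\|T_r\|\,\|A^{-1}\|\,\|x\|$. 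Your fallback of reading the series in the definition weak$^*$ changes the statement rather than proving it.

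The missing idea is to dualize the right-inverse relation, which is what the paper's one-line appeal to the preceding theorem amounts to. From $VT_r=\operatorname{id}_X$ you get $T_r^*V^*=\operatorname{id}_{X^*}$, so $T_r^*$ is a bounded left inverse of $V^*$. Here $V^*$ plays the role of the analysis operator of $\{x_i\}$ on $X^*$, since $(V^*g)(e_i)=g(x_i)$. If $\{e_i^*\}$ is a basis of $X_a^*$ (this is where RCB, or just a shrinking basis, enters), then $V^*g=\sum_i g(x_i)e_i^*$ in norm. Applying $T_r^*$ gives $g=\sum_i g(x_i)T_r^*e_i^*$ in the norm of $X^*$. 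Taking $g=A^*f$ and applying $(A^*)^{-1}=(A^{-1})^*$ yields $f=\sum_i(A^*f)(x_i)f_i$ in norm, which is the required $g$-dual reconstruction. Without such a basis assumption, your argument delivers only the weak$^*$ expansion.
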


\begin{proof}
	By the preceding theorem, the sequence
	\[
	\left\{(A^*)^{-1}T_r^*(e_i^*)\right\}_{i\in I}
	\]
	is an \(X_a\)-Bessel sequence and, for every \(f\in X^*\),
	\[
	f
	=
	\sum_{i\in I}(A^*f)(x_i)(A^*)^{-1}T_r^*(e_i^*).
	\]
	Since
	\[
	f_i:=(A^*)^{-1}T_r^*(e_i^*),
	\qquad i\in I,
	\]
	we obtain
	\[
	f=\sum_{i\in I}(A^*f)(x_i)f_i,
	\qquad f\in X^*.
	\]
	Hence, by the definition of a \(g\)-dual Banach frame,
	\(\{f_i\}_{i\in I}\) is a \(g\)-dual Banach frame for
	\(\{x_i\}_{i\in I}\) in \(X^*\), associated with the bounded and
	invertible operator \(A^*\).
\end{proof}
\noindent

To further examine the role of reconstruction operators in Banach frame
theory, we characterize all bounded left and right inverses of the
analysis and synthesis operators.
\begin{theorem}
	Let \(\Gamma=\{f_i\}_{i\in I}\subset X^*\) and
	\(\Lambda=\{x_i\}_{i\in I}\subset X\) be Banach frames for \(X\) and
	\(X^*\), respectively, with respect to a BK-space \(X_a\), and let
	\(U:X\to X_a\) and \(V:X_a\to X\) denote their analysis and synthesis
	operators. Then:
	
	\begin{enumerate}
		\item Every bounded left inverse of \(U\) is of the form
		\[
		S_l+W(I_{X_a}-US_l),
		\qquad W\in\mathcal{B}(X,X_a).
		\]
		If \(X_a\) is strictly convex and \(S_\Gamma\) is invertible, it is
		also of the form
		\[
		S_\Gamma^{-1}U^*\phi_{X_a}
		+W\bigl(I_{X_a}-US_\Gamma^{-1}U^*\phi_{X_a}\bigr).
		\]
		
		\item Every bounded right inverse of \(V\) is of the form
		\[
		S_r+(I_{X_a}-S_rV)W,
		\qquad W\in\mathcal{B}(X,X_a).
		\]
		If \(X_a\) is strictly convex and \(S_\Lambda\) is invertible, it is
		also of the form
		\[
		\phi_{X_a}V^*S_\Lambda^{-1}
		+\bigl(I_{X_a}-\phi_{X_a}V^*S_\Lambda^{-1}\bigr)W.
		\]
	\end{enumerate}
\end{theorem}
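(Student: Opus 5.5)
The plan is to use the standard parametrization of one-sided inverses of a bounded operator. Fix one particular one-sided inverse and write every other one as that fixed inverse plus a correction term that vanishes where it has to. Both parts of the statement reduce to two short algebraic identities, which I would check in each direction. Throughout I read the parameter space of \(W\) from the composition so the formulas make sense: \(W\in\mathcal{B}(X_a,X)\) in part (1) and \(W\in\mathcal{B}(X,X_a)\) in part (2).

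For part (1), let \(S_l\) be the reconstruction operator of the Banach frame \(\Gamma\), so \(S_lU=I_X\). \emph{Sufficiency:} for any \(W\),
\[
\bigl(S_l+W(I_{X_a}-US_l)\bigr)U=S_lU+W(U-US_lU)=I_X+W(U-U)=I_X,
\]
so each such operator is a bounded left inverse. \emph{Necessity:} let \(L\) be any bounded left inverse of \(U\). Then \(LUS_l=S_l\), hence
\[
L=S_l+(L-LUS_l)=S_l+L(I_{X_a}-US_l),
\]
and the choice \(W=L\) gives the required form. For the second description I would replace \(S_l\) by \(L_0:=S_\Gamma^{-1}U^*\phi_{X_a}\). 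Since \(S_\Gamma=U^*\phi_{X_a}U\) is invertible, \(L_0U=S_\Gamma^{-1}S_\Gamma=I_X\), and the same two computations go through verbatim with \(S_l\) replaced by \(L_0\).

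For part (2) the argument is dual. Let \(R\) be a fixed bounded right inverse of \(V\); in the statement this role is played by \(S_r\), and the hypothesis is \(VS_r=\operatorname{id}_X\). \emph{Sufficiency:*} \(V\bigl(R+(I_{X_a}-RV)W\bigr)=I_X+(V-VRV)W=I_X\). \emph{Necessity:} if \(VT=I_X\), then \(RVT=R\), so
\[
T=R+(T-RVT)=R+(I_{X_a}-RV)T,
\]
and \(W=T\) works. The second form uses \(R_0:=\phi_{X_a}V^*S_\Lambda^{-1}\), for which one needs \(VR_0=V\phi_{X_a}V^*S_\Lambda^{-1}=S_\Lambda S_\Lambda^{-1}=I\), with \(S_\Lambda=V\phi_{X_a}V^*\) the frame operator of \(\Lambda\) in the sense of \cite{Stoeva2008}.

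The algebra is routine. I expect the main obstacle to be justifying that the distinguished operators are genuinely bounded \emph{linear} one-sided inverses with the correct domains and codomains:
\begin{enumerate}
\item The duality map \(\phi_{X_a}\) is in general only homogeneous, not linear. So \(L_0\) and \(R_0\) are linear, and the parametrization is an identity in \(\mathcal{B}(X_a,X)\) or \(\mathcal{B}(X,X_a)\), only when \(\phi_{X_a}\) is linear (for instance, in the Hilbert-type case), or if one works with \(L_0\) merely as a map and checks that it is linear on the relevant range.
\item In part (2), the operator \(S_r\) of the Banach frame for \(X^*\) acts \(X_a\to X^*\), not \(X\to X_a\). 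Interpreting it as a right inverse of \(V\) requires identifying the analysis map \(f\mapsto\{f(x_i)\}\) with \(V^*\) restricted to \(X^*\), and dualizing (e.g.\ using reflexivity of \(X_a\), an RCB-space).
\end{enumerate}
I would first try to settle these points by stating explicitly that \(S_r\) (respectively \(S_\Gamma^{-1}U^*\phi_{X_a}\)) is assumed to be a fixed bounded linear one-sided inverse. I would then note that the parametrization holds for \emph{any} fixed one-sided inverse, so the two descriptions in each part are instances of a single lemma.
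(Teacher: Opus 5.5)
Your proposal is correct and follows the paper's argument: the paper likewise takes an arbitrary bounded left inverse \(T_l\) of \(U\) (resp.\ right inverse \(T_r\) of \(V\)), sets \(W=T_l\) (resp.\ \(W=T_r\)), and verifies \(S_l+T_l(I_{X_a}-US_l)=T_l\) (resp.\ \(S_r+(I_{X_a}-S_rV)T_r=T_r\)). Your explicit sufficiency check, your reduction of the canonical forms to the same identity via \(L_0\) and \(R_0\), and your caveats about the domain of \(W\), the nonlinearity of \(\phi_{X_a}\), and the type of \(S_r\) are all points the paper leaves unaddressed, and they are well taken.
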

\begin{proof}
(1) The proposed form satisfies the left inverse condition for \( U \).  
Assuming \( W = T_l \), a known left inverse of \( U \), we verify:
\[
S_l + T_l (I_{X_a} - U S_l) = T_l.
\]

(2) The second assertion can be established in a similar manner by employing the same line of reasoning with respect to the synthesis operator \( V \).
\end{proof}
\noindent
The following result establishes a connection between g-duality in \( X \) and classical duality in its dual space \( X^* \).
\begin{theorem}
Let \( \{f_i\}_{i \in I} \subset X^* \) be a g-dual Banach frame for the sequence \( \{x_i\}_{i \in I} \subset X \) in the Banach space \( X \), associated with a bounded invertible operator \( A \in B(X) \). Then the sequence \( \{x_i\}_{i \in I} \) constitutes a dual Banach frame for the collection of functionals \( \{f_i \circ A\}_{i \in I} \).
\end{theorem}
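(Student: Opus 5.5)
The plan is to test the $g$-dual reconstruction formula against an arbitrary functional $f\in X^*$. This gives the expansion $f=\sum_{i\in I}f(x_i)(f_i\circ A)$ in the weak$^*$ sense first. I will then upgrade it to a genuine norm-convergent expansion in $X^*$ using the Bessel hypotheses. The roles are exactly those required by part (2) of the definition of a dual Banach frame: coefficients $f(x_i)$ and building blocks $f_i\circ A$. The upgrade is not free; it needs the standing convention (as in Theorem 2.4) that $X_a$ is a CB-space, and reading the $X_a$-Bessel condition on $\{x_i\}$ as the one on functionals.

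\emph{Pointwise identity.} Fix $f\in X^*$ and $x\in X$. By hypothesis, $x=\sum_{i\in I}f_i(Ax)\,x_i$ with convergence in norm in $X$. Since $f$ is continuous, we may apply it termwise:
\[
f(x)=\sum_{i\in I}f_i(Ax)\,f(x_i)=\sum_{i\in I}f(x_i)\,(f_i\circ A)(x).
\]
Thus the partial sums $P_F f:=\sum_{i\in F}f(x_i)\,(f_i\circ A)$ over finite $F\subset I$ (taken along the fixed ordering) converge weak$^*$ to $f$. Moreover $f_i\circ A=A^*f_i$.

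\emph{Bessel property of $\{A^*f_i\}$.} Since $\{f_i\}$ is $X_a$-Bessel in $X^*$, its synthesis map $c\mapsto\sum_i c_i f_i$ is bounded from $X_a$ into $X^*$. Composing with the bounded operator $A^*$ shows that $c\mapsto\sum_i c_i A^*f_i$ converges in norm and satisfies $\bigl\|\sum_i c_iA^*f_i\bigr\|\le\|A\|\,C\,\|c\|_{X_a}$. So $\{f_i\circ A\}$ is again $X_a$-Bessel, as the definition of dual Banach frame requires. The family $\{x_i\}$ is $X_a$-Bessel by assumption.

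\emph{Norm convergence (main obstacle).} Here I expect the real difficulty, because weak$^*$ convergence of $P_Ff$ does not automatically give norm convergence. I would argue as follows.
\begin{enumerate}
\item The Bessel hypothesis on $\{x_i\}$ gives $c:=\{f(x_i)\}_{i\in I}\in X_a$.
\item Since the unit vectors $\{e_i\}$ form a Schauder basis of $X_a$, $c=\sum_i f(x_i)e_i$ in the norm of $X_a$.
\item Applying the bounded synthesis operator of $\{A^*f_i\}$, the series $g:=\sum_i f(x_i)\,A^*f_i$ converges in norm in $X^*$.
\item Norm convergence implies pointwise convergence, so for each $x\in X$ we get $g(x)=\sum_i f(x_i)\,f_i(Ax)=f(x)$ by the pointwise identity.
\end{enumerate}
Hence $g=f$, that is, $f=\sum_{i\in I}f(x_i)\,(f_i\circ A)$ in norm for every $f\in X^*$. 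By definition, $\{x_i\}$ is then a dual Banach frame for $\{f_i\circ A\}$ in $X^*$.

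If $X_a$ were not a CB-space, step 2 could fail. In that case I would only obtain the weak$^*$ expansion and would record the CB assumption explicitly.
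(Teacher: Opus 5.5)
The proof environment that the paper places after this theorem does not prove it. It repeats the verification $S_l+T_l(I_{X_a}-US_l)=T_l$, and its analogue for $V$, from the preceding characterization of left and right inverses, and $f_i\circ A$ never appears. So the paper offers no argument to compare with. Yours supplies one, and it is correct.

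Your first step, applying $f$ to $x=\sum_i f_i(Ax)x_i$ to get $f(x)=\sum_i f(x_i)(A^*f_i)(x)$, is presumably what the authors had in mind. As you say, it only yields a weak$^*$ expansion. Your upgrade is sound: coefficients in $X_a$, bounded synthesis of $\{f_i\}$ composed with $A^*$, and identification of the norm limit with $f$ by evaluating at each $x$. If the synthesis condition is read as norm convergence of $\sum_i c_if_i$ for every $c\in X_a$, as the paper does when it checks Bessel conditions, even the CB property in your step 2 is unnecessary.

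Treat the hypotheses you flag as genuine additions, not reinterpretations. Under the Section 3 convention ($U:X\to X_a$ and $V:X_a\to X$ bounded), the sequence $\{f(x_i)\}_{i\in I}$ is $V^*f$ and naturally lies in $X_a^*$. You instead use the other pair of properties: coefficients of $\{x_i\}$ in $X_a$, and synthesis of $\{f_i\}$ from $X_a$. These do not follow in general, and they cannot be dropped.

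Take $X=X_a=\ell^1$, $x_i=e_i$, $f_i$ the coordinate functionals and $A=I_X$. Then $X_a$ is a CB-space, and both sequences are Bessel in either reading. Moreover $x=\sum_i f_i(x)x_i$ in norm. Yet for $f=(1,1,1,\dots)\in\ell^\infty$ the partial sums $\sum_{i\le n}f(x_i)f_i$ stay at distance $1$ from $f$. Here $\{f(x_i)\}=f\notin\ell^1$, which is exactly what your extra assumption rules out. So the statement as written holds only in the weak$^*$ sense, and your proof gives the correct norm-convergent version under the hypotheses you make explicit.
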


```tex
\begin{proof}
	For (1), let \(T_l\) be a bounded left inverse of \(U\). Setting
	\(W=T_l\), we have
	\[
	S_l+T_l(I_{X_a}-US_l)
	=S_l+T_l-T_lUS_l
	=T_l,
	\]
	since \(T_lU=I_X\). Hence, the proposed operator is a left inverse of \(U\).
	
	For (2), the argument is analogous. If \(T_r\) is a bounded right inverse
	of \(V\), then, with \(W=T_r\),
	\[
	S_r+(I_{X_a}-S_rV)T_r
	=S_r+T_r-S_rVT_r
	=T_r,
	\]
	because \(VT_r=I_X\).
\end{proof}

\noindent
To illustrate Theorem 2.12, we consider the following example, which
connects \(g\)-duality with classical duality in a Hilbert space.

\begin{example}
	Let \(X=\ell^2\) and let \(\{e_i\}_{i\in\mathbb{N}}\) be its canonical
	orthonormal basis. Let \(A\in\mathcal{B}(X)\) be defined by
	\[
	A(x)=(\alpha_1x_1,\alpha_2x_2,\ldots),
	\]
	where
	\[
	0<\inf_{i\in\mathbb{N}}\alpha_i
	\leq\sup_{i\in\mathbb{N}}\alpha_i<\infty.
	\]
	Define
	\[
	x_i=A^{-1}e_i,\qquad
	f_i(x)=\langle x,e_i\rangle,
	\qquad i\in\mathbb{N}.
	\]
	Then, for every \(x\in X\),
	\[
	\sum_{i=1}^{\infty}f_i(Ax)x_i
	=\sum_{i=1}^{\infty}
	\langle Ax,e_i\rangle A^{-1}e_i
	=A^{-1}(Ax)
	=x.
	\]
	Thus, \(\{f_i\}_{i\in\mathbb{N}}\) is a \(g\)-dual Banach frame for
	\(\{x_i\}_{i\in\mathbb{N}}\) associated with \(A\).
\end{example}

The following result yields a direct construction of infinitely many
\(g\)-dual Banach frames associated with a given Banach frame.

\begin{theorem}
Suppose the sequence \( \Gamma = \{f_i\}_{i \in I} \subset X^* \) serves as a g-dual Banach frame for the collection \( \Lambda = \{x_i\}_{i \in I} \subset X \), where \( X \) is a Banach space and \( X_a \) is a corresponding BK-space. If the associated frame operator \( S_\Gamma : X_a \rightarrow X \) is invertible, and if scalars \( \alpha, \beta \in \mathbb{C} \) satisfy \( \alpha + \beta = 1 \), then the sequence \( \{g_i\}_{i \in I} \subset X^* \), defined by
\[
g_i := \alpha f_i + \beta (A^{-1})^* S_\Gamma^{-1}(x_i),
\]
forms a g-dual Banach frame for \( \Lambda \).
\end{theorem}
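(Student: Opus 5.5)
The plan is to use linearity of the reconstruction formula in the analysis family. Put
\[
h_i:=(A^{-1})^*S_\Gamma^{-1}(x_i),\qquad i\in I,
\]
so that \(g_i=\alpha f_i+\beta h_i\). Suppose \(\{h_i\}_{i\in I}\) is also a \(g\)-dual Banach frame for \(\Lambda\) with respect to the \emph{same} operator \(A\), and is \(X_a\)-Bessel. Then the statement follows at once. For every \(x\in X\),
\[
\sum_{i\in I}g_i(Ax)\,x_i=\alpha\sum_{i\in I}f_i(Ax)\,x_i+\beta\sum_{i\in I}h_i(Ax)\,x_i=\alpha x+\beta x=x,
\]
since \(\alpha+\beta=1\). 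Both series converge in \(X\) by the \(X_a\)-Bessel property of \(\Lambda\), so they may be added termwise. The Bessel condition for \(\{g_i\}\) also passes to linear combinations. Its analysis map is \(x\mapsto \alpha\{f_i(x)\}+\beta\{h_i(x)\}\), which lands in \(X_a\) and is bounded by \(|\alpha|B_\Gamma+|\beta|B_h\).

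So the work reduces to two claims about \(\{h_i\}\):
\begin{enumerate}
\item \(\{h_i\}\) is \(X_a\)-Bessel in \(X^*\);
\item \(\sum_{i\in I}h_i(Ax)\,x_i=x\) for all \(x\in X\).
\end{enumerate}
For (2), unwind the adjoint: \(h_i(Ax)=\bigl(S_\Gamma^{-1}x_i\bigr)(A^{-1}Ax)=\bigl(S_\Gamma^{-1}x_i\bigr)(x)\). Hence (2) is exactly the canonical-dual reconstruction
\[
x=\sum_{i\in I}\bigl(S_\Gamma^{-1}x_i\bigr)(x)\,x_i ,
\]
which plays the role of the Hilbert identity \(x=\sum\langle x,S^{-1}x_i\rangle x_i\). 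I would derive it from invertibility of the frame operator. Write the relevant frame map as a composition of the synthesis operator \(V\) of \(\Lambda\), the duality map \(\phi_{X_a}\), and an analysis operator. Invertibility then lets one factor the identity on \(X\) through \(V\), exactly as in the left/right-inverse characterisation of Theorem 2.11. Concretely, \(\{(S_\Gamma^{-1}x_i)(\cdot)\}_{i}\) is the coefficient map of the particular right-inverse \(\phi_{X_a}V^*S_\Lambda^{-1}\) appearing there, and composing with \(V\) gives the identity. For (1), boundedness of \(S_\Gamma^{-1}\) and of \((A^{-1})^*\) would give \(\|\{h_i(x)\}\|_{X_a}\le C\|x\|\), with \(C\) a product of \(\|A^{-1}\|\), \(\|S_\Gamma^{-1}\|\) and the Bessel bound of \(\Lambda\).

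The main obstacle is step (2), and more basically making the composition \((A^{-1})^*S_\Gamma^{-1}(x_i)\) well-typed. As literally written, \(S_\Gamma:X_a\to X\) sends \(x_i\in X\) into \(X_a\), not into \(X^*\). I would first fix the interpretation: \(S_\Gamma\) is read as the \(X_a\)-frame map in the sense of Stoeva recalled in the preliminaries, so that \(S_\Gamma^{-1}x_i\) is a functional on \(X\) and \((A^{-1})^*\) acts on \(X^*\). With this reading, the first thing I would try for (2) is the identity \(V\circ(\text{canonical right inverse})=I_X\) from Theorem 2.11(2), specialised to \(c=\{x\mapsto (S_\Gamma^{-1}x_i)(x)\}\). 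The remaining routine verifications (termwise addition of convergent series, linearity of Bessel bounds) I would not expand in detail.
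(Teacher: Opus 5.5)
Your reduction is what the paper's proof does. It rewrites $x=\sum_i f_i(Ax)x_i$ as $\sum_i[\alpha f_i(Ax)+\beta h_i(Ax)]x_i$ ``since $\alpha+\beta=1$'', which silently presupposes your claim (2) with the same $A$. So the paper offers no argument for (2) either. You are right that (2) is the whole content, but your justification of it has a genuine gap.

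\textbf{Why your argument for (2) fails.}
\begin{enumerate}
\item Reading $S_\Gamma$ as Stoeva's map does not make the formula well typed. $S_\Gamma=U^*\varphi_{X_a}U$ maps $X$ into $X^*$, so $S_\Gamma^{-1}$ acts on $X^*$, not on $x_i\in X$.
\item Decisively, if $S_\Gamma$ is a frame operator of $\Gamma$ (Stoeva's or the Hilbert one), then (2) is false. Take $X=X_a=\mathbb{R}$, $I=\{1\}$, $f_1(x)=x$, $x_1=2$, $A=\tfrac12$. Then $f_1(Ax)x_1=x$, and $S_\Gamma=I$ because $U$ and $\varphi_{X_a}$ are the identity. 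Hence $h_1=(A^{-1})^*x_1$ is $y\mapsto 4y$, and $h_1(Ax)x_1=4x\neq x$. For $\alpha=\tfrac43$, $\beta=-\tfrac13$ one even gets $g_1=0$, so under your reading the statement itself fails.
\item The detour through Theorem 2.11(2) cannot repair this. It silently swaps $S_\Gamma$ for $S_\Lambda$. It needs $\Lambda$ to be a Banach frame for $X^*$ with $S_\Lambda$ invertible, which is not assumed. Matching the coordinates of $\varphi_{X_a}V^*S_\Lambda^{-1}x$ with $(S^{-1}x_i)(x)$ presupposes linearity of $\varphi_{X_a}$, which fails e.g.\ for $\ell^p$, $p\neq 2$. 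It also presupposes a symmetry of $S^{-1}$ that is precisely what (2) asserts.
\end{enumerate}

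\textbf{The missing idea.} It is already in the Hilbert identity you quote, $x=\sum_i\langle x,S^{-1}x_i\rangle x_i$: there $S$ is the frame operator of $\{x_i\}$, not of $\{f_i\}$. So use the \emph{linear} map $S_\Lambda:X^*\to X$, $S_\Lambda f=\sum_i f(x_i)x_i$. Assume $f\mapsto\{f(x_i)\}$ is bounded from $X^*$ into $X_a$ with bound $B$, that $S_\Lambda$ is invertible, and set $h_i=(A^{-1})^*S_\Lambda^{-1}x_i$.

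The key fact is symmetry. For $f,g\in X^*$ we have $g(S_\Lambda f)=\sum_i f(x_i)g(x_i)=f(S_\Lambda g)$. Hence $(S_\Lambda^{-1}x)(y)=(S_\Lambda^{-1}y)(x)$ for $x,y\in X$, and therefore
\[
\sum_{i\in I}h_i(Ax)\,x_i=\sum_{i\in I}\bigl(S_\Lambda^{-1}x_i\bigr)(x)\,x_i=\sum_{i\in I}\bigl(S_\Lambda^{-1}x\bigr)(x_i)\,x_i=S_\Lambda S_\Lambda^{-1}x=x .
\]
Your estimate (1) tacitly needs the same symmetry. It gives $h_i(x)=\bigl(S_\Lambda^{-1}A^{-1}x\bigr)(x_i)$, so $\|\{h_i(x)\}\|_{X_a}\le B\|S_\Lambda^{-1}\|\,\|A^{-1}\|\,\|x\|$. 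With this replacement your linear-combination step goes through verbatim.
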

\begin{proof}
	For \(x\in X\), the \(g\)-duality of \(\Gamma\) and \(\Lambda\) gives
	\[
	x=\sum_{i\in I}f_i(Ax)x_i.
	\]
	Since \(\alpha+\beta=1\), we obtain
	\[
	\begin{aligned}
		x
		&=\sum_{i\in I}
		\left[\alpha f_i(Ax)
		+\beta\bigl((A^{-1})^*S_\Gamma^{-1}(x_i)\bigr)(Ax)\right]x_i\\
		&=\sum_{i\in I}g_i(Ax)x_i.
	\end{aligned}
	\]
	Hence, \(\{g_i\}_{i\in I}\) satisfies the \(g\)-duality condition for
	\(\Lambda\) associated with \(A\).
\end{proof}

\begin{remark}\cite[Chapter~I]{Singer1981}.
	We recall that two sequences \(\{x_i\}_{i\in I}\) and
	\(\{y_i\}_{i\in I}\) in a Banach space \(X\) are said to be equivalent
	if there exists a bounded invertible operator
	\(\Theta\in B(X)\) such that
	\[
	y_i=\Theta x_i,\qquad i\in I.
	\]

\end{remark}

\begin{theorem}
	Let \(\{f_i\}_{i\in I}\subset X^*\) be a \(g\)-dual Banach frame for
	\(\{x_i\}_{i\in I}\subset X\) with respect to a BK-space \(X_a\),
	associated with an invertible operator \(A\in B(X)\).
	Suppose that \(\{g_i\}_{i\in I}\subset X^*\) is equivalent to
	\(\{f_i\}_{i\in I}\), and that the equivalence is induced by an
	invertible operator \(R\in B(X)\), namely,
	\[
	g_i=R^*f_i,\qquad i\in I.
	\]
	Then there exists a sequence \(\{y_i\}_{i\in I}\subset X\), equivalent
	to \(\{x_i\}_{i\in I}\), such that \(\{y_i\}_{i\in I}\) is a
	\(g\)-dual Banach frame for \(\{g_i\}_{i\in I}\) with respect to
	\(X_a\), associated with a suitable invertible operator on \(X\).
\end{theorem}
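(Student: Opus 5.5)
The plan is to dualize the reconstruction formula in \(X\), absorb \(R\) into the operator, and read off a reconstruction formula in \(X^*\) of exactly the form required by part 2 of the definition of \(g\)-dual Banach frames. That form is
\[
f=\sum_{i\in I}(Bf)(y_i)\,g_i ,\qquad f\in X^*,
\]
for some invertible \(B\in B(X^*)\). I will take \(B=S^*\) for an invertible \(S\in B(X)\); this \(S\) is the ``suitable invertible operator on \(X\)'' in the statement.

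\emph{Step 1 (dualizing).} I start from \(x=\sum_{i\in I}f_i(Ax)x_i\) for \(x\in X\). Applying an arbitrary \(f\in X^*\) and using continuity gives
\[
f(x)=\sum_{i\in I}f(x_i)\,(A^*f_i)(x),\qquad x\in X .
\]
So \(\sum_i f(x_i)A^*f_i\) agrees with \(f\) pointwise on \(X\). To turn this into a norm-convergent identity in \(X^*\), I use the \(X_a\)-Bessel hypothesis on \(\{f_i\}\) in \(X^*\), which makes \(\sum_i c_i f_i\) converge for \(c\in X_a\). Following the convention already used in the proofs of Theorems 2.6--2.8, I take the coefficient sequence \(\{f(x_i)\}_{i\in I}\) to lie in \(X_a\). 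Applying the bounded operator \(A^*\) then gives a norm-convergent series, and since norm limits are in particular weak\(^*\) limits, \(f=\sum_i f(x_i)A^*f_i\) in \(X^*\). I expect this step to be the main obstacle: the membership \(\{f(x_i)\}\in X_a\) is the one point where the paper's Bessel convention must be invoked explicitly rather than derived.

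\emph{Step 2 (absorbing \(R\)).} Since \(g_i=R^*f_i\), we have \(f_i=(R^{-1})^*g_i\). Hence
\[
f=(R^{-1}A)^*\sum_{i\in I}f(x_i)g_i .
\]
Here \(\{g_i\}\) is \(X_a\)-Bessel because \(R^*\) is bounded, so the inner series converges. Put \(T:=R^{-1}A\), which is invertible in \(B(X)\). Then \((T^*)^{-1}f=\sum_i f(x_i)g_i\). Replacing \(f\) by \(T^*f\) yields
\[
f=\sum_{i\in I}(T^*f)(x_i)\,g_i .
\]

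\emph{Step 3 (passing to an equivalent sequence).} Fix any invertible \(\Theta\in B(X)\), for instance \(\Theta=I_X\), and set \(y_i:=\Theta x_i\). By the remark recalling equivalence from Singer, \(\{y_i\}\) is equivalent to \(\{x_i\}\), and it is \(X_a\)-Bessel because \(\sum c_iy_i=\Theta\sum c_ix_i\). Now define \(S:=T\Theta^{-1}=R^{-1}A\Theta^{-1}\) and \(B:=S^*=(\Theta^{-1})^*T^*\). For every \(h\in X^*\),
\[
\bigl((\Theta^{-1})^*h\bigr)(y_i)=h(\Theta^{-1}\Theta x_i)=h(x_i).
\]
Taking \(h=T^*f\) gives \((Bf)(y_i)=(T^*f)(x_i)\), so the formula from Step 2 becomes
\[
f=\sum_{i\in I}(Bf)(y_i)\,g_i,\qquad f\in X^*,
\]
with \(B=S^*\) invertible on \(X^*\) because \(S\) is invertible on \(X\). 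This is exactly the statement that \(\{y_i\}\) is a \(g\)-dual Banach frame for \(\{g_i\}\) with respect to \(X_a\), associated with \(S\) (equivalently, with its adjoint \(S^*\) on \(X^*\)).
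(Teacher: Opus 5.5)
Your Step 1 has a genuine gap: the membership $\{f(x_i)\}_{i\in I}\in X_a$ is not a convention of the paper, and the hypotheses do not supply it. Applying $f\in X^*$ to $x=\sum_i f_i(Ax)x_i$ shows only that the partial sums of $\sum_i f(x_i)A^*f_i$ converge to $f$ weak$^*$. Where the paper used such a membership before (the existence result for Banach frames of $X^*$), it came from assuming $\{x_i\}$ is a Banach frame for $X^*$, which you do not have here.

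The assumption also genuinely fails. Take $X=X_a=\ell^1$, $x_i=e_i$, $f_i=e_i^*$ (coordinate functionals), and $A=R=I_X$. The analysis and synthesis operators are bounded, $\sum_i c_ie_i^*$ converges in $\ell^\infty$ for $c\in\ell^1$, and $x=\sum_i f_i(x)x_i$ in $\ell^1$. Yet for any $\{y_i\}\subset\ell^1$ and any $B\in B(\ell^\infty)$, the partial sums of $\sum_i (Bf)(y_i)g_i$ are finitely supported. A norm limit therefore lies in $c_0$ and cannot equal $f=(1,1,1,\dots)$. So the $X^*$-side identity $f=\sum_i(Bf)(y_i)g_i$ you aim for is false in general, and your argument proves it only under an added hypothesis.

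The statement asks for an operator on $X$, and the paper's proof checks the $X$-side relation $x=\sum_i g_i(Cx)y_i$. The theorem on sums $\{x_i+y_i\}$ uses the phrase the same way. For that relation your Step 2 idea of absorbing $R$ into the operator works directly, with no dualization. For any invertible $\Theta\in B(X)$, set $y_i=\Theta x_i$ and $C=R^{-1}A\Theta^{-1}$. Then
\[
\sum_{i\in I} g_i(Cx)\,y_i=\Theta\sum_{i\in I} f_i\bigl(RR^{-1}A\Theta^{-1}x\bigr)x_i=\Theta\Theta^{-1}x=x,\qquad x\in X .
\]
The Bessel conditions for $\{y_i\}$ and $\{g_i\}$ are inherited trivially.

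Note also that the paper's own display, which takes $y_i=R^{-1}x_i$ but keeps the operator $A$, actually evaluates to $R^{-1}A^{-1}RAx$. That equals $x$ only when $R$ and $A$ commute. The operator that works with $y_i=R^{-1}x_i$ is $C=R^{-1}AR$, which is exactly the absorption of $R$ you carried out in Step 2.
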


\begin{proof}
	For every \(x\in X\), using \(g_i=R^*f_i\) and
	\(y_i=R^{-1}x_i\), we obtain
	\[
	\begin{aligned}
		\sum_{i\in I}g_i(Ax)y_i
		&=\sum_{i\in I}(R^*f_i)(Ax)R^{-1}x_i\\
		&=\sum_{i\in I}f_i(Ax)x_i
		=x.
	\end{aligned}
	\]
	Thus, \(\{y_i\}_{i\in I}\) is a \(g\)-dual Banach frame for
	\(\{g_i\}_{i\in I}\). Since \(R\) is invertible, the sequences
	\(\{x_i\}_{i\in I}\) and \(\{y_i\}_{i\in I}\) are equivalent.
\end{proof}
\noindent
    We now study how the \(g\)-duality property behaves under bounded invertible operators on a Banach space and its dual.

\begin{theorem}
Let \( X \) be a Banach space, and let \( \{f_i\}_{i \in I} \subset X^* \) and \( \{x_i\}_{i \in I} \subset X \) be two sequences and \( \psi \in B(X) \) be a bounded bijective linear operator. Then the sequence \( \{f_i\} \) is a g-dual Banach frame for \( \{x_i\} \) with respect to a Banach space \( X_a \) if and only if the sequence \( \{\psi^*(f_i)\} \) is a g-dual Banach frame for \( \{\psi(x_i)\} \) with respect to \( X_a \).
\end{theorem}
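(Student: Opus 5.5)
The plan is to prove both directions by a direct computation with the g-dual reconstruction formula, making explicit which invertible operator accompanies the transformed pair. Two points need care: the operator is not preserved but conjugated, and the $X_a$-Bessel hypotheses have to be carried over to the new sequences.

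\emph{Forward direction.} Suppose $\{f_i\}$ is a g-dual Banach frame for $\{x_i\}$ with associated invertible $A\in B(X)$, so that
\[
x=\sum_{i\in I} f_i(Ax)\,x_i, \qquad x\in X.
\]
Since $(\psi^*f_i)(y)=f_i(\psi y)$, I would look for an operator $A'$ with
\[
\sum_{i} (\psi^* f_i)(A'x)\,\psi x_i=x .
\]
The natural candidate is $A'=\psi^{-1}A\psi^{-1}$. It is bounded and invertible because $\psi$ is bijective, and by the open mapping theorem $\psi^{-1}$ is bounded. With this choice,
\[
\sum_i f_i(\psi A' x)\,\psi x_i=\sum_i f_i\bigl(A(\psi^{-1}x)\bigr)\,\psi x_i .
\]
Since $\psi$ is bounded, the convergent series can be pulled through $\psi$, so this equals $\psi\Bigl(\sum_i f_i(A\psi^{-1}x)\,x_i\Bigr)=\psi(\psi^{-1}x)=x$.

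\emph{Bessel conditions.} These still have to be checked.
\begin{itemize}
\item For $\{\psi x_i\}$: the synthesis map is $c\mapsto\sum_i c_i\psi x_i=\psi\bigl(\sum_i c_i x_i\bigr)$, so it is bounded with norm at most $\|\psi\|$ times the Bessel bound of $\{x_i\}$.
\item For $\{\psi^*f_i\}$: interpreting the condition either as $\{(\psi^*f_i)(x)\}=\{f_i(\psi x)\}\in X_a$ with norm at most $B\|\psi\|\|x\|$, or as boundedness of $c\mapsto\sum_i c_i\psi^*f_i=\psi^*\bigl(\sum_i c_i f_i\bigr)$, the bound transfers through $\psi$ or $\psi^*$ in the same way.
\end{itemize}

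\emph{Converse.} I would not redo the computation. Instead I would apply the forward direction to the pair $\{\psi^*f_i\}$, $\{\psi x_i\}$ with the operator $\psi^{-1}$ in place of $\psi$. Since $(\psi^{-1})^*\psi^*f_i=f_i$ and $\psi^{-1}\psi x_i=x_i$, this recovers the original pair, with associated operator $\psi A'\psi$.

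\emph{Expected obstacle.} The only delicate point is interpretational. The statement leaves the associated invertible operator unspecified, and it is generally \emph{not} the original $A$: the conjugated operator $\psi^{-1}A\psi^{-1}$ is what makes the identity work. I would state this explicitly. I would also justify interchanging $\psi$ with the infinite sum by continuity, using the convergence guaranteed by the Bessel condition.
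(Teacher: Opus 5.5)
Your proof is correct. It follows the paper's route: a direct computation that pulls $\psi$ through the convergent series, with the converse obtained by applying the forward direction to $\psi^{-1}$. You also check that the $X_a$-Bessel conditions transfer, which the paper skips.

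The one substantive difference is the associated operator, and here your version is the right one. The paper takes $\widetilde{A}=\psi A\psi^{-1}$. However, $(\psi^*f_i)(\psi A\psi^{-1}x)=f_i(\psi^2A\psi^{-1}x)$, so the paper's first equality silently treats $\psi^*$ as $(\psi^{-1})^*$ and fails in general. For instance, with $X=\mathbb{C}$, $f_1(x)=x$, $x_1=1$, $A=I$ and $\psi=2$, the paper's choice yields $4x$ instead of $x$. Your $\psi^{-1}A\psi^{-1}$ is exactly what the identity $(\psi^*f_i)(y)=f_i(\psi y)$ requires, so your argument is the one that actually establishes the statement.
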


```latex
\begin{proof}
	Suppose that \(\{f_i\}_{i\in I}\) is a \(g\)-dual Banach frame for
	\(\{x_i\}_{i\in I}\) with respect to \(X_a\), associated with an
	invertible operator \(A\in B(X)\). Define
	\[
	y_i:=\psi(x_i),\qquad
	g_i:=\psi^*(f_i),\qquad
	\widetilde{A}:=\psi A\psi^{-1},
	\qquad i\in I.
	\]
	For an arbitrary \(x\in X\), we have
	\[
	\begin{aligned}
		\sum_{i\in I} g_i(\widetilde{A}x)y_i
		&=\sum_{i\in I}
		(\psi^*f_i)(\psi A\psi^{-1}x)\,\psi(x_i)\\
		&=\psi\left(
		\sum_{i\in I}
		f_i(A\psi^{-1}x)x_i
		\right)\\
		&=\psi(\psi^{-1}x)\\
		&=x.
	\end{aligned}
	\]
	Thus, the pair
	\(\big(\{g_i\}_{i\in I},\{y_i\}_{i\in I}\big)\)
	satisfies the reconstruction identity associated with
	\(\widetilde{A}\). Consequently,
	\(\{g_i\}_{i\in I}\) is a \(g\)-dual Banach frame for
	\(\{y_i\}_{i\in I}\) with respect to \(X_a\), associated with
	the invertible operator \(\widetilde{A}\).
	
	Conversely, applying the above argument to the isomorphism
	\(\psi^{-1}\) yields the reverse implication.
\end{proof}

\noindent

To illustrate the stability of \(g\)-duality under operator transformations, we consider a simple functional example involving scalar scaling and additive shifts.

\begin{example}
Consider the Banach space \(X=C[0,1]\), consisting of all continuous
real-valued functions on \([0,1]\), equipped with the supremum norm
\[
\|x\|_\infty=\sup_{t\in[0,1]}|x(t)|.
\]
Define the sequence \(\{x_i\}_{i\in\mathbb{N}}\subset X\) by
\[
x_i(t)=t^i,\qquad t\in[0,1].
\]
Let \(\{f_i\}_{i\in\mathbb{N}}\subset X^*\) be given by point evaluations
at distinct points \(t_i\in[0,1]\), that is,
\[
f_i(x)=x(t_i),\qquad x\in X.
\]
Suppose that \(\{f_i\}_{i\in\mathbb{N}}\) is a \(g\)-dual Banach frame
for \(\{x_i\}_{i\in\mathbb{N}}\), associated with an invertible operator
\(A\in B(X)\). Thus,
\[
x(t)=\sum_{i=1}^{\infty}f_i(Ax)x_i(t),
\qquad x\in X.
\]

Now consider the invertible operators
\[
\varphi\in B(X^*),\qquad \varphi(f)=2f,
\]
and
\[
\psi\in B(X),\qquad \psi(x)=2x.
\]
Consequently,
\[
\varphi(f_i)=2f_i,
\qquad
\psi(x_i)=2x_i.
\]
Moreover, the operator associated with the transformed sequences is
\[
\widetilde{A}=\psi A\psi^{-1}.
\]
By Theorem~2.16, the transformed sequence
\(\{\varphi(f_i)\}_{i\in\mathbb{N}}\) is a \(g\)-dual Banach frame
for \(\{\psi(x_i)\}_{i\in\mathbb{N}}\), with respect to \(X_a\), and is
associated with \(\widetilde{A}\). In particular, for every \(x\in X\),
\[
x
=
\sum_{i=1}^{\infty}
\varphi(f_i)(\widetilde{A}x)\,\psi(x_i).
\]
Hence, this example illustrates that \(g\)-duality is preserved under
simultaneous invertible transformations of the dual functionals and
the frame elements.
\end{example}
\noindent
We now explore how combining two \(g\)-dual Banach frames can yield a new \(g\)-dual Banach frame, provided that their associated operators satisfy a suitable compatibility condition.

\begin{theorem}
	Let \(\{x_i\}_{i\in I}\) and \(\{y_i\}_{i\in I}\) be two \(g\)-dual Banach frames for the same sequence
	\(\{f_i\}_{i\in I}\subset X^*\), both with respect to the BK-space \(X_a\), and associated with invertible operators
	\(A,B\in B(X)\), respectively. Suppose that the operator
	\[
	A^{-1}+B^{-1}
	\]
	is invertible. Then the sequence
	\[
	\{x_i+y_i\}_{i\in I}
	\]
	is a \(g\)-dual Banach frame for \(\{f_i\}_{i\in I}\) with respect to \(X_a\), associated with the invertible operator
	\[
	\left(A^{-1}+B^{-1}\right)^{-1}.
	\]
\end{theorem}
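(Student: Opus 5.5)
The plan is to reduce both hypotheses to closed-form synthesis identities, add them, and then undo the sum operator by a substitution. This is the same device used in the proofs of Theorems 2.6 and 2.9.

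\textbf{Interpreting the hypothesis.} I read ``\(\{x_i\}\) is a \(g\)-dual Banach frame for \(\{f_i\}\) associated with \(A\)'' as the reconstruction identity in \(X\) from part (1) of the definition:
\[
x=\sum_{i\in I}f_i(Ax)\,x_i,\qquad x=\sum_{i\in I}f_i(Bx)\,y_i,\qquad x\in X.
\]
This is the reading consistent with \(A,B\in B(X)\). Since \(A\) and \(B\) are bijective, I will replace \(x\) by \(A^{-1}x\) in the first identity and by \(B^{-1}x\) in the second. This gives, for every \(x\in X\),
\[
\sum_{i\in I}f_i(x)\,x_i=A^{-1}x,\qquad \sum_{i\in I}f_i(x)\,y_i=B^{-1}x,
\]
with both series convergent in \(X\).

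\textbf{Adding the identities.} Both series converge in the fixed ordering of \(I\), so their termwise sum converges. Hence
\[
\sum_{i\in I}f_i(x)\,(x_i+y_i)=(A^{-1}+B^{-1})x,\qquad x\in X.
\]
Set \(C:=(A^{-1}+B^{-1})^{-1}\). By hypothesis \(C\) is a bounded invertible operator in \(B(X)\); its boundedness follows from the bounded inverse theorem, or directly from the assumption that \(A^{-1}+B^{-1}\) is invertible in \(B(X)\). Replacing \(x\) by \(Cx\) then yields
\[
x=\sum_{i\in I}f_i(Cx)\,(x_i+y_i),\qquad x\in X,
\]
which is exactly the reconstruction formula required for \(g\)-duality with respect to \(C\).

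\textbf{Remaining conditions of the definition.} The \(X_a\)-Bessel condition on \(\{f_i\}\) is part of the hypotheses and is unchanged. For \(\{x_i+y_i\}\), take any \(c\in X_a\). Both series \(\sum c_ix_i\) and \(\sum c_iy_i\) converge, with norms bounded by \(C_1\|c\|_{X_a}\) and \(C_2\|c\|_{X_a}\) respectively. Therefore \(\sum c_i(x_i+y_i)\) converges and has norm at most \((C_1+C_2)\|c\|_{X_a}\), so \(\{x_i+y_i\}\) is \(X_a\)-Bessel.

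\textbf{Main obstacle.} I expect the only delicate point to be bookkeeping rather than analysis: fixing the direction of the \(g\)-duality so that the operator \(A\) acts inside \(f_i(\cdot)\). Under the other reading, the one with \(B(X^*)\), the statement would not type-check with \(A,B\in B(X)\). The rest is linearity together with the invertibility assumption, which is precisely what allows the substitution \(x\mapsto Cx\).
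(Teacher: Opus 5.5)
Your proof is correct and is essentially the paper's argument. The paper uses the same reading of $g$-duality, with the operator acting inside $f_i(\cdot)$. It plugs $T=(A^{-1}+B^{-1})^{-1}$ directly into the two reconstruction identities to get $\sum_{i\in I} f_i(Tx)x_i=A^{-1}Tx$ and $\sum_{i\in I} f_i(Tx)y_i=B^{-1}Tx$, which is your two-step substitution done in one step. Your explicit check that $\{x_i+y_i\}_{i\in I}$ is $X_a$-Bessel is a small addition that the paper leaves implicit.
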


\begin{proof}
	Set
	\[
	T:=\left(A^{-1}+B^{-1}\right)^{-1}\in B(X).
	\]
	Since \(A^{-1}+B^{-1}\) is invertible, \(T\) is a bounded
	invertible operator on \(X\).
	
	For every \(x\in X\), we have
	\[
	\begin{aligned}
		\sum_{i\in I} f_i(Tx)(x_i+y_i)
		&=
		\sum_{i\in I} f_i(Tx)x_i
		+
		\sum_{i\in I} f_i(Tx)y_i.
	\end{aligned}
	\]
	By the \(g\)-duality of \(\{x_i\}_{i\in I}\) and
	\(\{y_i\}_{i\in I}\) with respect to \(\{f_i\}_{i\in I}\), we obtain
	\[
	\sum_{i\in I} f_i(Tx)x_i=A^{-1}Tx
	\]
	and
	\[
	\sum_{i\in I} f_i(Tx)y_i=B^{-1}Tx.
	\]
	Consequently,
	\[
	\begin{aligned}
		\sum_{i\in I} f_i(Tx)(x_i+y_i)
		&=(A^{-1}+B^{-1})Tx\\
		&=x.
	\end{aligned}
	\]
	Thus, the sequence \(\{x_i+y_i\}_{i\in I}\) satisfies the
	\(g\)-duality relation with respect to \(\{f_i\}_{i\in I}\),
	associated with the invertible operator
	\[
	T=\left(A^{-1}+B^{-1}\right)^{-1}.
	\]
	Therefore, \(\{x_i+y_i\}_{i\in I}\) is a \(g\)-dual Banach frame
	for \(\{f_i\}_{i\in I}\) with respect to \(X_a\).
\end{proof}

\section{Duality and Perturbation Results for \(g\)-Dual Banach Frames}

In this section, we investigate the conditions that ensure two sequences
form \(g\)-dual Banach frames. We further explore the connections between
approximate duality, pseudo-duality, and \(g\)-duality, and study the
stability of these duality properties under perturbations.

\begin{definition}
	Let \(\{f_i\}_{i\in I}\subseteq X^*\) and
	\(\{x_i\}_{i\in I}\subseteq X\) satisfy the \(X_a\)-Bessel condition,
	and let
	\[
	U:X\to X_a,\qquad V:X_a\to X
	\]
	denote the corresponding analysis and synthesis operators, respectively.
	Then:
	
	\begin{itemize}
		\item The sequence \(\{f_i\}_{i\in I}\) is called a
		\textbf{pseudo-dual Banach frame} for \(\{x_i\}_{i\in I}\)
		with respect to \(X_a\) if the operator \(VU\) is invertible on \(X\).
		
		\item The sequence \(\{f_i\}_{i\in I}\) is called an
		\textbf{approximately dual Banach frame} for \(\{x_i\}_{i\in I}\)
		with respect to \(X_a\) if
		\[
		\|I_X-VU\|<1.
		\]
		
		\item The sequence \(\{x_i\}_{i\in I}\) is called a
		\textbf{pseudo-dual Banach frame} for \(\{f_i\}_{i\in I}\)
		if the operator \(U^*V^*\) is invertible on \(X^*\).
		
		\item The sequence \(\{x_i\}_{i\in I}\) is called an
		\textbf{approximately dual Banach frame} for \(\{f_i\}_{i\in I}\)
		if
		\[
		\|I_{X^*}-U^*V^*\|<1.
		\]
	\end{itemize}
\end{definition}

\begin{example}
	Let \(X=\ell^2\) and \(X_a=\ell^2\), and let
	\(\{e_i\}_{i\in\mathbb{N}}\) denote the canonical basis of \(\ell^2\).
	For a fixed scalar \(\lambda\in\mathbb{R}\), define
	\[
	x_i=e_i,
	\qquad
	f_i(x)=\lambda x_i,
	\qquad
	x=(x_i)_{i\in\mathbb{N}}\in\ell^2.
	\]
	The corresponding analysis and synthesis operators are given by
	\[
	U(x)=\lambda(x_i)_{i\in\mathbb{N}},
	\qquad
	V(c_i)_{i\in\mathbb{N}}
	=\sum_{i=1}^{\infty}c_i e_i.
	\]
	Hence,
	\[
	VU=\lambda I_X.
	\]
	Similarly,
	\[
	U^*V^*=\lambda I_{X^*}.
	\]
	
	If \(\lambda\neq0\), then \(VU\) and \(U^*V^*\) are invertible.
	Consequently, \(\{f_i\}_{i\in\mathbb{N}}\) is a pseudo-dual Banach frame
	for \(\{x_i\}_{i\in\mathbb{N}}\), while
	\(\{x_i\}_{i\in\mathbb{N}}\) is a pseudo-dual Banach frame for
	\(\{f_i\}_{i\in\mathbb{N}}\).
	
	Furthermore, if
	\[
	|\lambda-1|<1,
	\]
	then
	\[
	\|I_X-VU\|
	=
	\|I_X-\lambda I_X\|
	=
	|1-\lambda|<1,
	\]
	and
	\[
	\|I_{X^*}-U^*V^*\|
	=
	\|I_{X^*}-\lambda I_{X^*}\|
	=
	|1-\lambda|<1.
	\]
	Thus, both pairs are approximately dual Banach frames.
	
	For instance, when \(\lambda=\frac12\), we have
	\[
	VU=\frac12 I_X,
	\qquad
	U^*V^*=\frac12 I_{X^*},
	\]
	so that
	\[
	\|I_X-VU\|
	=
	\|I_{X^*}-U^*V^*\|
	=
	\frac12<1.
	\]
	Hence, the two sequences are simultaneously pseudo-dual and
	approximately dual Banach frames.
	
	On the other hand, if \(\lambda=2\), then \(VU=2I_X\) and
	\(U^*V^*=2I_{X^*}\) remain invertible, so pseudo-duality holds, whereas
	\[
	\|I_X-VU\|
	=
	\|I_{X^*}-U^*V^*\|
	=1>1
	\]
	does not satisfy the approximate duality condition. Thus, this choice
	provides a simple example of pseudo-duality without approximate
	duality.
\end{example}

\begin{example}
	Let \(X=\ell^2\) and \(X_a=\ell^2\), and let
	\(\{e_i\}_{i\in\mathbb{N}}\) denote the canonical basis of \(\ell^2\).
	For a fixed scalar \(\lambda\in\mathbb{R}\), define
	\[
	x_i=e_i,
	\qquad
	f_i(x)=\lambda x_i,
	\qquad
	x=(x_i)_{i\in\mathbb{N}}\in\ell^2.
	\]
	The corresponding analysis and synthesis operators are given by
	\[
	U(x)=\lambda(x_i)_{i\in\mathbb{N}},
	\qquad
	V(c_i)_{i\in\mathbb{N}}
	=\sum_{i=1}^{\infty}c_i e_i.
	\]
	Hence,
	\[
	VU=\lambda I_X.
	\]
	Similarly,
	\[
	U^*V^*=\lambda I_{X^*}.
	\]
	
	If \(\lambda\neq0\), then \(VU\) and \(U^*V^*\) are invertible.
	Consequently, \(\{f_i\}_{i\in\mathbb{N}}\) is a pseudo-dual Banach frame
	for \(\{x_i\}_{i\in\mathbb{N}}\), while
	\(\{x_i\}_{i\in\mathbb{N}}\) is a pseudo-dual Banach frame for
	\(\{f_i\}_{i\in\mathbb{N}}\).
	
	Furthermore, if
	\[
	|\lambda-1|<1,
	\]
	then
	\[
	\|I_X-VU\|
	=
	\|I_X-\lambda I_X\|
	=
	|1-\lambda|<1,
	\]
	and
	\[
	\|I_{X^*}-U^*V^*\|
	=
	\|I_{X^*}-\lambda I_{X^*}\|
	=
	|1-\lambda|<1.
	\]
	Thus, both pairs are approximately dual Banach frames.
	
	For instance, when \(\lambda=\frac12\), we have
	\[
	VU=\frac12 I_X,
	\qquad
	U^*V^*=\frac12 I_{X^*},
	\]
	so that
	\[
	\|I_X-VU\|
	=
	\|I_{X^*}-U^*V^*\|
	=
	\frac12<1.
	\]
	Hence, the two sequences are simultaneously pseudo-dual and
	approximately dual Banach frames.
	
	On the other hand, if \(\lambda=2\), then \(VU=2I_X\) and
	\(U^*V^*=2I_{X^*}\) remain invertible, so pseudo-duality holds, whereas
	\[
	\|I_X-VU\|
	=
	\|I_{X^*}-U^*V^*\|
	=1>1
	\]
	does not satisfy the approximate duality condition. Thus, this choice
	provides a simple example of pseudo-duality without approximate
	duality.
\end{example}
\begin{theorem}
	Assume that the sequences \(\{f_i\}_{i\in I}\subset X^*\) and
	\(\{x_i\}_{i\in I}\subset X\) satisfy the \(X_a\)-Bessel condition in
	\(X^*\) and \(X\), respectively. Let
	\[
	U:X\to X_a,\qquad V:X_a\to X
	\]
	denote the corresponding analysis and synthesis operators. Then the
	following statements hold:
	
	\begin{enumerate}
		\item If \(\{f_i\}_{i\in I}\) is a Banach dual frame for
		\(\{x_i\}_{i\in I}\), then \(\{f_i\}_{i\in I}\) is an approximately
		dual Banach frame for \(\{x_i\}_{i\in I}\).
		
		\item If \(\{f_i\}_{i\in I}\) is an approximately dual Banach frame
		for \(\{x_i\}_{i\in I}\), then \(\{f_i\}_{i\in I}\) is a pseudo-dual
		Banach frame for \(\{x_i\}_{i\in I}\).
		
		\item Suppose that \(\{f_i\}_{i\in I}\subset X^*\) is a
		pseudo-dual Banach frame for \(\{x_i\}_{i\in I}\subset X\), and let
		\(G\in B(X)\) be invertible. Then \(\{G^*{}^{-1}f_i\}_{i\in I}\)
		is a pseudo-dual Banach frame for \(\{Gx_i\}_{i\in I}\).
		
		\item If \(\{f_i\}_{i\in I}\) is a pseudo-dual Banach frame for
		\(\{x_i\}_{i\in I}\), then it is a \(g\)-dual Banach frame for
		\(\{x_i\}_{i\in I}\), associated with the operator
		\[
		A=(VU)^{-1}.
		\]
		
		\item If \(\{f_i\}_{i\in I}\) is a pseudo-dual Banach frame for
		\(\{x_i\}_{i\in I}\), then \(\{f_i\}_{i\in I}\) is a Banach dual
		frame for the sequence
		\[
		\{(VU)^{-1}x_i\}_{i\in I}.
		\]
		
		\item The sequence \(\{x_i\}_{i\in I}\) is a \(g\)-dual Banach frame
		for \(\{f_i\}_{i\in I}\) if and only if
		\(\{x_i\}_{i\in I}\) is a pseudo-dual Banach frame for
		\(\{f_i\}_{i\in I}\).
	\end{enumerate}
\end{theorem}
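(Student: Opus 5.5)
I would prove the six items one at a time. Each reduces to an operator identity for the synthesis-analysis composition
\[
VU:X\to X,\qquad VUx=\sum_{i\in I}f_i(x)x_i ,
\]
which is bounded because both sequences are \(X_a\)-Bessel. I would use this operator throughout.

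For (1), being a Banach dual frame means exactly \(VU=I_X\). Hence \(\|I_X-VU\|=0<1\), and \(\{f_i\}\) is approximately dual. For (2), the hypothesis \(\|I_X-VU\|<1\) lets me write \(VU=I_X-(I_X-VU)\) and invoke the Neumann series. This shows that \(VU\) is invertible in \(B(X)\), which is the definition of pseudo-duality.

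For (3), I would compute the new analysis and synthesis operators. Write \(\widetilde U\) for the analysis operator of \(\{(G^*)^{-1}f_i\}\) and \(\widetilde V\) for the synthesis operator of \(\{Gx_i\}\).
\begin{itemize}
\item Since \(((G^*)^{-1}f_i)(x)=f_i(G^{-1}x)\), we get \(\widetilde U=UG^{-1}\).
\item Clearly \(\widetilde V=GV\).
\item Both new sequences remain \(X_a\)-Bessel, since \(G\) and \(G^{-1}\) are bounded.
\end{itemize}
Therefore \(\widetilde V\widetilde U=G(VU)G^{-1}\), which is invertible.

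For (4), set \(A=(VU)^{-1}\). Then for every \(x\in X\),
\[
\sum_{i\in I} f_i(Ax)x_i=VU(VU)^{-1}x=x,
\]
which is the defining identity of a \(g\)-dual Banach frame. For (5), the synthesis operator of \(\{(VU)^{-1}x_i\}\) is \((VU)^{-1}V\). Its composition with \(U\) is therefore \((VU)^{-1}VU=I_X\), so \(x=\sum_i f_i(x)(VU)^{-1}x_i\). The sequence \(\{(VU)^{-1}x_i\}\) is Bessel because \((VU)^{-1}\) is bounded.

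Item (6) needs more care, because the \(g\)-dual relation lives in \(X^*\). For \(f\in X^*\), I would identify \(\sum_i f(x_i)f_i\) with \(U^*V^*f\), where \(V^*f=\{f(x_i)\}\) is read as an element of \(X_a\) via the coordinate functionals and \(U^*\) acts as synthesis against \(\{f_i\}\). Checking this identification, in particular that the series converges in norm of \(X^*\) (weak-\(*\) convergence can be upgraded using the Bessel property), is where I expect the real friction. I would state it as a standing convention from the Bessel hypotheses.

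With the identification in place, the two directions are:
\begin{itemize}
\item If \(U^*V^*\) is invertible, take \(B=(U^*V^*)^{-1}\). Then \(\sum_i (Bf)(x_i)f_i=U^*V^*Bf=f\), so \(\{x_i\}\) is a \(g\)-dual Banach frame for \(\{f_i\}\).
\item Conversely, if \(f=\sum_i (Bf)(x_i)f_i\) for every \(f\in X^*\), then \(U^*V^*B=I_{X^*}\), so \(U^*V^*\) is surjective with right inverse \(B\). Since \(B\) is invertible, \(U^*V^*=B^{-1}\) is invertible.
\end{itemize}
This last observation, that a right inverse which is itself invertible forces two-sided invertibility, also closes the analogous gap in (4) without extra argument.
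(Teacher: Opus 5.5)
Your proposal is correct and follows the paper's proof item by item: the Neumann series, the conjugation $G(VU)G^{-1}$ (the paper's $GVG^{-1}$ is a typo), and the choices $A=(VU)^{-1}$, $B=(U^*V^*)^{-1}$ all match, and your step $U^*V^*B=I_{X^*}\Rightarrow U^*V^*=B^{-1}$ supplies the forward direction of (6), which the paper only asserts. One caution: since $U^*V^*=(VU)^*$ and $V^*g$ lies in $X_a^*$ rather than $X_a$, you get $\sum_i g(x_i)f_i=U^*V^*g$ only in the weak-$*$ sense, and the Bessel property does \emph{not} upgrade this to norm convergence (take $X=X_a=\ell^1$, $x_i=e_i$, $f_i=e_i^*$, $g=(1,1,\dots)\in\ell^\infty$), so the converse in (6) should be read with weak-$*$ convergence, as the paper tacitly does, or it needs an extra hypothesis such as $X_a$ being an RCB-space.
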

\begin{proof}
	Let
	\[
	U:X\to X_a,\qquad V:X_a\to X
	\]
	be the analysis and synthesis operators associated with
	\(\{f_i\}_{i\in I}\) and \(\{x_i\}_{i\in I}\), respectively.
	
	\begin{enumerate}
		\item Suppose that \(\{f_i\}_{i\in I}\) is a Banach dual frame for
		\(\{x_i\}_{i\in I}\). By the definition of Banach duality, we have
		\[
		VU=I_X.
		\]
		Therefore,
		\[
		\|I_X-VU\|=0<1.
		\]
		Hence, \(\{f_i\}_{i\in I}\) is an approximately dual Banach frame
		for \(\{x_i\}_{i\in I}\).
		
		\item Assume that \(\{f_i\}_{i\in I}\) is an approximately dual
		Banach frame for \(\{x_i\}_{i\in I}\). Then
		\[
		\|I_X-VU\|<1.
		\]
		By the Neumann series theorem, \(VU\) is invertible on \(X\), with
		\[
		(VU)^{-1}
		=
		\sum_{n=0}^{\infty}(I_X-VU)^n.
		\]
		Thus, \(\{f_i\}_{i\in I}\) is a pseudo-dual Banach frame for
		\(\{x_i\}_{i\in I}\).
		
		\item Suppose that \(\{f_i\}_{i\in I}\) is a pseudo-dual Banach
		frame for \(\{x_i\}_{i\in I}\), and let \(G\in B(X)\) be invertible.
		Define
		\[
		\widetilde{x}_i=Gx_i,
		\qquad
		\widetilde{f}_i=(G^{-1})^*f_i.
		\]
		The corresponding analysis operator is
		\[
		\widetilde{U}x
		=
		\bigl(\widetilde{f}_i(x)\bigr)_{i\in I}
		=
		\bigl(f_i(G^{-1}x)\bigr)_{i\in I}
		=
		U(G^{-1}x),
		\]
		while the synthesis operator satisfies
		\[
		\widetilde{V}(c_i)_{i\in I}
		=
		\sum_{i\in I}c_iGx_i
		=
		G V(c_i)_{i\in I}.
		\]
		Consequently,
		\[
		\widetilde{V}\widetilde{U}
		=
		GV G^{-1}.
		\]
		Since \(VU\) is invertible and \(G\) is invertible,
		\(GVG^{-1}\) is invertible. Hence
		\(\{\widetilde{f}_i\}_{i\in I}\) is a pseudo-dual Banach frame
		for \(\{\widetilde{x}_i\}_{i\in I}\).
		
		\item Suppose that \(\{f_i\}_{i\in I}\) is a pseudo-dual Banach
		frame for \(\{x_i\}_{i\in I}\). Then \(VU\) is invertible. Put
		\[
		A=(VU)^{-1}.
		\]
		For every \(x\in X\),
		\[
		\sum_{i\in I}f_i(Ax)x_i
		=
		VU(Ax)
		=
		VU(VU)^{-1}x
		=
		x.
		\]
		Therefore, \(\{f_i\}_{i\in I}\) is a \(g\)-dual Banach frame for
		\(\{x_i\}_{i\in I}\), associated with the invertible operator
		\(A=(VU)^{-1}\).
		
		\item Let \(\{f_i\}_{i\in I}\) be a pseudo-dual Banach frame for
		\(\{x_i\}_{i\in I}\). Define
		\[
		\widetilde{x}_i=(VU)^{-1}x_i.
		\]
		Then, for every \(x\in X\),
		\[
		\begin{aligned}
			\sum_{i\in I}f_i(x)\widetilde{x}_i
			&=
			\sum_{i\in I}f_i(x)(VU)^{-1}x_i\\
			&=
			(VU)^{-1}
			\sum_{i\in I}f_i(x)x_i\\
			&=
			(VU)^{-1}VUx\\
			&=x.
		\end{aligned}
		\]
		Hence, \(\{f_i\}_{i\in I}\) is a Banach dual frame for
		\(\{\widetilde{x}_i\}_{i\in I}\).
		
		\item Suppose that \(\{x_i\}_{i\in I}\) is a \(g\)-dual Banach frame
		for \(\{f_i\}_{i\in I}\). Then there exists an invertible operator
		\(A\in B(X^*)\) such that
		\[
		f
		=
		\sum_{i\in I}f_i(Af)x_i,
		\qquad f\in X^*.
		\]
		Thus, the corresponding composition of the analysis and synthesis
		operators is invertible on \(X^*\), and consequently
		\(\{x_i\}_{i\in I}\) is a pseudo-dual Banach frame for
		\(\{f_i\}_{i\in I}\).
		
		Conversely, suppose that \(\{x_i\}_{i\in I}\) is a pseudo-dual
		Banach frame for \(\{f_i\}_{i\in I}\). Then the corresponding
		operator \(U^*V^*\) is invertible on \(X^*\). Setting
		\[
		A=(U^*V^*)^{-1},
		\]
		we obtain
		\[
		U^*V^*Af=f,
		\qquad f\in X^*.
		\]
		Hence,
		\[
		f
		=
		\sum_{i\in I}f_i(Af)x_i,
		\qquad f\in X^*,
		\]
		which is precisely the \(g\)-duality relation. Therefore,
		\(\{x_i\}_{i\in I}\) is a \(g\)-dual Banach frame for
		\(\{f_i\}_{i\in I}\).
	\end{enumerate}
\end{proof}

\begin{remark}\label{rem:dual}
	The canonical dual associated with a Banach frame
	\(\{f_i\}_{i\in I}\) for \(X\), with respect to the coefficient space
	\(X_a\), satisfies the defining condition for approximate duality.
	Consequently, it is also a pseudo-dual Banach frame. More generally, if
	\(\{f_i\}_{i\in I}\subset X^*\) is an approximately dual Banach frame
	for \(\{x_i\}_{i\in I}\subset X\) with respect to \(X_a\), then it is
	also a \(g\)-dual Banach frame for \(\{x_i\}_{i\in I}\). The converse
	need not hold; in general, a \(g\)-dual Banach frame need not be
	approximately dual.
\end{remark}

The following example demonstrates that the converse of Remark~\ref{rem:dual} does not necessarily hold. Although every approximately dual Banach frame is a \(g\)-dual Banach frame, the converse fails in general: there exist \(g\)-dual Banach frames that are not approximately dual. This example highlights the broader class of \(g\)-duality in comparison with approximate duality.

\begin{example}
	Let \(X=X_a=\ell^2\), and let
	\(\{e_i\}_{i\in\mathbb{N}}\) denote the standard orthonormal basis of
	\(\ell^2\). Define
	\[
	f_i(x)=\langle x,e_i\rangle,
	\qquad x\in\ell^2.
	\]
	Let \(\varphi\in X^*\) be given by
	\[
	\varphi(x)=\sum_{i=1}^{\infty}\frac{1}{i}\langle x,e_i\rangle,
	\]
	and define the rank-one operator \(R\in B(X)\) by
	\[
	Rx=\varphi(x)e_1.
	\]
	Since \((1/i)_{i\in\mathbb{N}}\in\ell^2\), the functional
	\(\varphi\) is bounded. Moreover,
	\[
	\varphi(e_1)=1,
	\qquad
	R^2=R.
	\]
	Set
	\[
	A:=I_X-2R.
	\]
	Since \(R^2=R\), we have
	\[
	A^{-1}=I_X+R,
	\]
	and hence \(A\) is bounded and invertible.
	
	Now define
	\[
	x_i:=A^{-1}e_i
	=e_i+\frac{1}{i}e_1,
	\qquad i\in\mathbb{N}.
	\]
	The analysis and synthesis operators associated with
	\(\{f_i\}_{i\in\mathbb{N}}\) and \(\{x_i\}_{i\in\mathbb{N}}\) are
	\[
	U:X\to\ell^2,
	\qquad
	U(x)=\bigl(f_i(x)\bigr)_{i\in\mathbb{N}},
	\]
	and
	\[
	V:\ell^2\to X,
	\qquad
	V(c)=\sum_{i=1}^{\infty}c_i x_i.
	\]
	Under the identification \(X=X_a=\ell^2\), we have
	\[
	VU=A^{-1}.
	\]
	
	For every \(x\in X\),
	\[
	\begin{aligned}
		\sum_{i=1}^{\infty}f_i(Ax)x_i
		&=V(U(Ax))\\
		&=A^{-1}Ax\\
		&=x.
	\end{aligned}
	\]
	Hence, \(\{f_i\}_{i\in\mathbb{N}}\) is a \(g\)-dual Banach frame for
	\(\{x_i\}_{i\in\mathbb{N}}\) with respect to \(X_a=\ell^2\), associated
	with the invertible operator \(A\).
	
	On the other hand,
	\[
	I_X-VU
	=
	I_X-A^{-1}
	=
	-R.
	\]
	Since \(R\) is a rank-one operator,
	\[
	\|R\|
	=
	\|\varphi\|\,\|e_1\|
	=
	\left(\sum_{i=1}^{\infty}\frac{1}{i^2}\right)^{1/2}
	=
	\frac{\pi}{\sqrt{6}}>1.
	\]
	Consequently,
	\[
	\|I_X-VU\|
	=
	\frac{\pi}{\sqrt{6}}>1.
	\]
	Therefore,
	\[
	\|I_X-VU\|<1
	\]
	does not hold. Thus, \(\{f_i\}_{i\in\mathbb{N}}\) is a
	\(g\)-dual Banach frame for \(\{x_i\}_{i\in\mathbb{N}}\), but it is not
	an approximately dual Banach frame.
	
	This example shows that the converse of the implication stated in
	Remark~\ref{rem:dual} does not hold in general.
\end{example}

The stability of approximately dual Banach frames under small
perturbations was established in \cite{Karimizad2014}. Motivated by
the observations in Remark~\ref{rem:dual}, analogous stability results can be
obtained for \(g\)-dual Banach frames, extending the perturbation
theory to this more general setting.

\begin{theorem}
	Let \(\{f_i\}_{i\in I}\subset X^*\) be a Banach frame for \(X\) with
	respect to the coefficient space \(X_a\), and let \(U:X\to X_a\) and
	\(S_\ell:X_a\to X\) denote its analysis and reconstruction operators,
	respectively. Suppose that \(\{g_i\}_{i\in I}\subset X^*\) satisfies
	the following conditions for some constants \(\lambda,\mu\geq 0\):
	
	\begin{enumerate}
		\item
		\(    2(\lambda\|U\|+\mu)\|S_\ell\|\leq 1.
		\)
	
		\item For every \(x\in X\),
		\[
		\left\|
		\{f_i(x)-g_i(x)\}_{i\in I}
		\right\|_{X_a}
		\leq
		\lambda
		\left\|
		\{f_i(x)\}_{i\in I}
		\right\|_{X_a}
		+\mu\|x\|.
		\]

	\end{enumerate}
	
	Then \(\{g_i\}_{i\in I}\) is a Banach frame for \(X\) with respect to
	\(X_a\).
\end{theorem}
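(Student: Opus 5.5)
Write \(W:X\to X_a\), \(Wx=\{g_i(x)\}_{i\in I}\), for the analysis map of \(\{g_i\}\). The plan is to treat \(W\) as a perturbation of \(U\). We then build a reconstruction operator for \(\{g_i\}\) from \(S_\ell\) by a Neumann series, in the same spirit as the argument for item (2) of the theorem on approximate and pseudo-duality above.

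\textbf{Step 1: \(W\) is well defined and bounded above.} Condition 2 only makes sense if \(\{f_i(x)-g_i(x)\}_{i\in I}\in X_a\), and I will read it that way. Then
\[
Wx=Ux-\{f_i(x)-g_i(x)\}_{i\in I}\in X_a,
\]
and
\[
\|Wx\|_{X_a}\le \|Ux\|_{X_a}+\lambda\|Ux\|_{X_a}+\mu\|x\|\le\bigl((1+\lambda)\|U\|+\mu\bigr)\|x\|.
\]
This gives the upper frame bound.

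\textbf{Step 2: \(S_\ell W\) is invertible.} Using \(S_\ell U=I_X\), we have
\[
\|(I_X-S_\ell W)x\|=\|S_\ell(U-W)x\|\le\|S_\ell\|\bigl(\lambda\|U\|+\mu\bigr)\|x\|\le\tfrac12\|x\|,
\]
by condition 1. Hence \(\|I_X-S_\ell W\|\le\tfrac12<1\), and the Neumann series shows that \(S_\ell W\) is invertible with \(\|(S_\ell W)^{-1}\|\le 2\).

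\textbf{Step 3: reconstruction and the lower bound.} Define
\[
S'_\ell:=(S_\ell W)^{-1}S_\ell:X_a\to X.
\]
This is a bounded operator with \(\|S'_\ell\|\le 2\|S_\ell\|\), and by construction \(S'_\ell W=I_X\). Therefore \(\|x\|\le\|S'_\ell\|\,\|Wx\|_{X_a}\), which gives the lower frame bound
\[
\frac{1}{2\|S_\ell\|}\|x\|\le\|Wx\|_{X_a},\qquad x\in X.
\]
The lower bound can also be obtained directly. Condition 2 gives
\[
\|Wx\|\ge(1-\lambda)\|Ux\|-\mu\|x\|,
\]
and \(\|Ux\|\ge\|S_\ell\|^{-1}\|x\|\). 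Condition 1 forces \(\lambda\le\lambda\|U\|\|S_\ell\|\le\tfrac12\), since \(\|U\|\|S_\ell\|\ge1\) (from \(S_\ell U=I_X\)). Combining these yields
\[
\|Wx\|\ge\bigl(\|S_\ell\|^{-1}-\lambda\|U\|-\mu\bigr)\|x\|\ge\frac{1}{2\|S_\ell\|}\|x\|.
\]
The two routes agree. Together with Step 1, this shows that \((\{g_i\}_{i\in I},S'_\ell)\) is a Banach frame for \(X\) with respect to \(X_a\).

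The only delicate point is Step 1, namely that \(Wx\) actually lies in \(X_a\). If condition 2 is not read as implicitly asserting membership of the difference sequence in \(X_a\), I would add that assumption. Otherwise, in a BK-space one would need some closedness argument, which is not available in general. Everything else is a routine Neumann-series estimate.
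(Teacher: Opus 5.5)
Your proof is correct and follows the paper's argument essentially verbatim. The paper also writes $S_\ell U_g=I_X+S_\ell E$ with $\|S_\ell E\|\le\tfrac12$ and takes the reconstruction operator $(I_X+S_\ell E)^{-1}S_\ell$, which is identical to your $(S_\ell W)^{-1}S_\ell$; it obtains the same bounds $\frac{1}{2\|S_\ell\|}$ and $(1+\lambda)\|U\|+\mu$, reading the lower bound off $x=S_\ell(U_gx-Ex)$. Your reading that condition 2 implicitly asserts $\{f_i(x)-g_i(x)\}_{i\in I}\in X_a$ is exactly what the paper tacitly assumes when it says $E$ is well defined.
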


\begin{proof}
	Let
	
	$$
	U_g:X\to X_a,
	\qquad
	U_g(x):=\{g_i(x)\}_{i\in I},
	$$
	
	be the analysis operator associated with \(\{g_i\}_{i\in I}\). Set
	
	$$
	E:=U_g-U.
	$$
	
	By the assumed perturbation condition, \(E:X\to X_a\) is well defined and,
	for every \(x\in X\),
	
	$$
	\|Ex\|_{X_a}
	\leq
	\lambda\|U(x)\|_{X_a}+\mu\|x\|
	\leq
	(\lambda\|U\|+\mu)\|x\|.
	$$
	
	Hence,
	
	$$
	\|E\|\leq\lambda\|U\|+\mu.
	$$
	
	Consequently,
	
	$$
	\|S_\ell E\|
	\leq
	\|S_\ell\|(\lambda\|U\|+\mu)
	\leq\frac12<1.
	$$
	
	Since \(S_\ell U=I_X\), we have
	
	$$
	S_\ell U_g
	=
	S_\ell(U+E)
	=
	I_X+S_\ell E.
	$$
	
	The operator \(I_X+S_\ell E\) is therefore invertible by the Neumann
	series theorem. Define
	
	$$
	S_g:=(I_X+S_\ell E)^{-1}S_\ell.
	$$
	
	Then \(S_g:X_a\to X\) is bounded. Moreover, for every \(x\in X\),
	
	$$
	\begin{aligned}
		S_g(U_gx)
		&=
		(I_X+S_\ell E)^{-1}S_\ell U_gx\\
		&=
		(I_X+S_\ell E)^{-1}(I_X+S_\ell E)x\\
		&=x.
	\end{aligned}
	$$
	
	Thus, \(S_g\) is a bounded reconstruction operator for
	\(\{g_i\}_{i\in I}\).
	
	It remains to establish the frame bounds. For every \(x\in X\),
	
	$$
	\begin{aligned}
		\|U_gx\|_{X_a}
		&\leq
		\|Ux\|_{X_a}+\|Ex\|_{X_a}\\
		&\leq
		\bigl((1+\lambda)\|U\|+\mu\bigr)\|x\|.
	\end{aligned}
	$$
	
	This gives the upper frame bound.
	
	For the lower frame bound, using \(S_\ell U=I_X\), we obtain
	
	$$
	x=S_\ell Ux=S_\ell(U_gx-Ex).
	$$
	
	Therefore,
	
	$$
	\begin{aligned}
		\|x\|
		&\leq
		\|S_\ell\|\|U_gx\|_{X_a}
		+\|S_\ell\|\|Ex\|_{X_a}\\
		&\leq
		\|S_\ell\|\|U_gx\|_{X_a}
		+\|S_\ell\|(\lambda\|U\|+\mu)\|x\|.
	\end{aligned}
	$$
	
	Since
	
	$$
	\|S_\ell\|(\lambda\|U\|+\mu)\leq\frac12,
	$$
	
	we obtain
	
	$$
	\frac12\|x\|
	\leq
	\|S_\ell\|\|U_gx\|_{X_a}.
	$$
	
	Hence,
	
	$$
	\frac{1}{2\|S_\ell\|}\|x\|
	\leq
	\|U_gx\|_{X_a}.
	$$
	
	Combining the two estimates yields
	
	$$
	\frac{1}{2\|S_\ell\|}\|x\|
	\leq
	\|\{g_i(x)\}_{i\in I}\|_{X_a}
	\leq
	\bigl((1+\lambda)\|U\|+\mu\bigr)\|x\|,
	\qquad x\in X.
	$$
	
	Therefore, \(\{g_i\}_{i\in I}\) is a Banach frame for \(X\) with respect
	to \(X_a\).
\end{proof}

This result establishes the stability of \(g\)-dual Banach frames under
uniform perturbations of the synthesis vectors. It provides sufficient
conditions ensuring that \(g\)-duality is preserved despite bounded
perturbations.

\begin{theorem}
	Let \(\{f_i\}_{i\in I}\subset X^*\) be a \(g\)-dual Banach frame for
	\(\{x_i\}_{i\in I}\subset X\) with respect to the coefficient space
	\(X_a\), and let \(A\in B(X)\) be the associated bounded invertible
	operator. Denote by \(U:X\to X_a\) the analysis operator associated with
	\(\{f_i\}_{i\in I}\), and suppose that
	
	$$
	\|Ux\|_{X_a}\leq B\|x\|,
	\qquad x\in X,
	$$
	
	for some \(B>0\).
	
	Let \(\{y_i\}_{i\in I}\subset X\) be a perturbation of
	\(\{x_i\}_{i\in I}\). Suppose that there exist constants
	\(\lambda,\mu\geq0\) such that
	
	$$
	\left\|
	\sum_{i\in I}c_i(x_i-y_i)
	\right\|
	\leq
	\lambda
	\left\|
	\sum_{i\in I}c_i x_i
	\right\|
	+\mu\|c\|_{X_a},
	\qquad c=\{c_i\}_{i\in I}\in X_a,
	$$
	
	and
	
	$$
	\lambda\|A^{-1}\|+\mu B\|A\|<1.
	$$
	
	Then \(\{f_i\}_{i\in I}\) is a \(g\)-dual Banach frame for
	\(\{y_i\}_{i\in I}\) with respect to \(X_a\), associated with a bounded
	invertible operator
	
	$$
	A_y=A(V_yUA)^{-1},
	$$
	
	where \(V_y:X_a\to X\) is the synthesis operator associated with
	\(\{y_i\}_{i\in I}\).
\end{theorem}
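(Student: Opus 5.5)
The plan is to reduce everything to invertibility of the operator \(V_yUA\) on \(X\) and then verify the \(g\)-dual reconstruction formula for the proposed \(A_y\) directly.

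\textbf{Step 1 (Bessel property of \(\{y_i\}\)).} Let \(V:X_a\to X\) be the synthesis operator of \(\{x_i\}_{i\in I}\). The perturbation hypothesis gives, for every \(c\in X_a\),
\[
\Bigl\|\sum_{i\in I}c_iy_i\Bigr\|\le (1+\lambda)\|Vc\|+\mu\|c\|_{X_a}\le\bigl((1+\lambda)\|V\|+\mu\bigr)\|c\|_{X_a}.
\]
Convergence of \(\sum_i c_iy_i\) follows by writing it as \(\sum_i c_ix_i-\sum_i c_i(x_i-y_i)\), where the perturbation series is controlled by the same inequality applied to tails. Hence \(\{y_i\}_{i\in I}\) satisfies the \(X_a\)-Bessel condition and \(V_y\) is bounded. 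I will also set \(D:=V-V_y\), so that \(\|Dc\|\le\lambda\|Vc\|+\mu\|c\|_{X_a}\).

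\textbf{Step 2 (invertibility of \(V_yUA\)).} The \(g\)-duality of \(\{f_i\}\) and \(\{x_i\}\) with operator \(A\) says exactly \(VUA=I_X\). Therefore
\[
V_yUA=VUA-DUA=I_X-DUA.
\]
For \(x\in X\), using \(VUAx=x\) and \(\|UAx\|\le B\|A\|\|x\|\), one gets
\[
\|DUAx\|\le\lambda\|VUAx\|+\mu\|UAx\|\le(\lambda+\mu B\|A\|)\|x\|.
\]
So \(\|DUA\|<1\), and Neumann's series makes \(V_yUA\) invertible. An alternative route is the factorization \(V_yU=A^{-1}-DU\), with \(\|DUx\|\le\lambda\|A^{-1}\|\|x\|+\mu B\|x\|\); this is the form in which the hypothesis \(\lambda\|A^{-1}\|+\mu B\|A\|\) naturally appears.

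\textbf{Step 3 (reconstruction formula).} Define \(A_y:=A(V_yUA)^{-1}\). It is bounded and invertible as a product of two invertible operators. For every \(x\in X\),
\[
\sum_{i\in I}f_i(A_yx)\,y_i=V_yUA(V_yUA)^{-1}x=x,
\]
which is exactly the statement that \(\{f_i\}\) is a \(g\)-dual Banach frame for \(\{y_i\}\) associated with \(A_y\).

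\textbf{Main obstacle.} The hard step is matching the constant in Step 2 to the stated hypothesis. The natural estimate requires \(\lambda+\mu B\|A\|<1\). This follows from the hypothesis whenever \(\|A^{-1}\|\ge 1\), since then \(\lambda\le\lambda\|A^{-1}\|\).

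If \(\|A^{-1}\|<1\), I would first try to extract the constant \(\lambda\|A^{-1}\|\) by estimating through the second factorization \(V_yU=A^{-1}-DU\), that is, by examining \(\|A(DU)\|\) versus \(\|(DU)A\|\). That route, however, naturally yields \(\|A\|(\lambda\|A^{-1}\|+\mu B)\). If neither factorization reproduces the stated constant, I would record that the argument actually uses \(\lambda+\mu B\|A\|<1\). This condition is implied by the hypothesis in the case \(\|A^{-1}\|\ge1\), which includes all \(A\) with \(\|A\|\le 1\).
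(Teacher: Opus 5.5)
The gap in your proposal is the case $\|A^{-1}\|<1$, which you leave as something you ``would try''. No better factorization can close it, because the statement is false in that regime. Take $X=X_a=\ell^2$, $f_i=\langle\cdot,e_i\rangle$, $x_i=\tfrac12e_i$ and $A=2I_X$. Then $\sum_i f_i(Ax)x_i=x$, $U$ is an isometry (so $B=1$), and $\|A^{-1}\|=\tfrac12$. Put $y_i=0$ for all $i$. The perturbation inequality holds with $\lambda=1$, $\mu=0$, and $\lambda\|A^{-1}\|+\mu B\|A\|=\tfrac12<1$. But $V_y=0$, so $V_yUA$ is not invertible, and $\sum_i f_i(Cx)y_i=0\neq x$ for every $x\neq0$ and every operator $C$. (A nondegenerate variant is $y_1=0$ and $y_i=x_i$ for $i\ge2$: again $\lambda=1$, $\mu=0$, and $e_1\notin\overline{\operatorname{span}}\{y_i\}$ cannot be reconstructed.)

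The structural reason is scaling. For $t>0$, replace $(x_i,y_i,A,\mu)$ by $(tx_i,ty_i,t^{-1}A,t\mu)$. This preserves $g$-duality (with the same $U$ and $B$), the perturbation inequality (with the same $\lambda$), and the conclusion, and it leaves $\lambda+\mu B\|A\|$ unchanged. But it multiplies $\lambda\|A^{-1}\|$ by $t$, so letting $t\to0$, the stated condition would admit any $\lambda$ once $\mu B\|A\|<1$.

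Your proposal should therefore end with this counterexample and the corrected hypothesis $\lambda+\mu B\|A\|<1$. Under that hypothesis your Steps 1--3 are a complete proof. The stated hypothesis implies the corrected one when $\|A^{-1}\|\ge1$, but not in general.

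Your Steps 2 and 3 have the same skeleton as the paper's proof: write $V_yUA=I_X-(V_x-V_y)UA$, invert by a Neumann series, and set $A_y=A(V_yUA)^{-1}$. The paper's own proof does not work under the stated hypothesis either. It silently assumes that the same $B$ also bounds the synthesis operator $V_x$. It then estimates $\lambda\|\sum_i f_i(Ax)x_i\|\le\lambda B\|UAx\|_{X_a}$ instead of using $\sum_i f_i(Ax)x_i=x$. It concludes under $(\lambda B+\mu)B\|A\|<1$, which is not the condition in the statement (in the example above it reads $2<1$).

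Under that extra assumption, $I_X=V_xUA$ forces $1\le\|V_x\|\|U\|\|A\|\le B^2\|A\|$, so the paper's condition implies yours. Your hypothesis is therefore weaker than the one the paper actually uses and needs no bound on $V_x$.

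One minor point in your Step 1: the tail argument for $\sum_i c_i(x_i-y_i)$ uses $\|\sum_{i=n}^m c_ie_i\|_{X_a}\to0$. That holds when $X_a$ is a CB-space but not for a general BK-space. In the general case you should take the existence of $V_y$ as part of the hypothesis, as the statement implicitly does.
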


\begin{proof}
	Let \(V_x,V_y:X_a\to X\) denote the synthesis operators associated with
	\(\{x_i\}_{i\in I}\) and \(\{y_i\}_{i\in I}\), respectively. By the
	assumed perturbation condition, for every \(c=\{c_i\}_{i\in I}\in X_a\),
	we have
	
	$$
	\|V_yc-V_xc\|
	=
	\left\|
	\sum_{i\in I}c_i(y_i-x_i)
	\right\|
	\leq
	\lambda
	\left\|
	\sum_{i\in I}c_i x_i
	\right\|
	+\mu\|c\|_{X_a}.
	$$
	
	Let \(U:X\to X_a\) be the analysis operator associated with
	\(\{f_i\}_{i\in I}\). Since \(\{f_i\}_{i\in I}\) is a \(g\)-dual Banach
	frame for \(\{x_i\}_{i\in I}\) associated with \(A\), we have
	
	$$
	V_xUA=I_X.
	$$
	
	Consequently, for every \(x\in X\),
	
	$$
	\begin{aligned}
		\|(V_yUA-I_X)x\|
		&=\|V_yUAx-V_xUAx\|\\
		&=
		\left\|
		\sum_{i\in I}f_i(Ax)(y_i-x_i)
		\right\|.
	\end{aligned}
	$$
	
	Applying the perturbation estimate with
	\(c=U(Ax)=\{f_i(Ax)\}_{i\in I}\), we obtain
	
	$$
	\|(V_yUA-I_X)x\|
	\leq
	\lambda
	\left\|
	\sum_{i\in I}f_i(Ax)x_i
	\right\|
	+\mu\|U(Ax)\|_{X_a}.
	$$
	
	Suppose that \(B>0\) is a common upper bound for the synthesis and
	analysis operators, so that
	
	$$
	\|V_xc\|\leq B\|c\|_{X_a},
	\qquad
	\|U x\|_{X_a}\leq B\|x\|,
	$$
	
	for all \(c\in X_a\) and \(x\in X\). It follows that
	
	$$
	\begin{aligned}
		\|(V_yUA-I_X)x\|
		&\leq
		\lambda B\|U(Ax)\|_{X_a}
		+\mu\|U(Ax)\|_{X_a}\\
		&\leq
		(\lambda B+\mu)B\|A\|\|x\|.
	\end{aligned}
	$$
	
	Therefore, under the condition
	
	$$
	(\lambda B+\mu)B\|A\|<1,
	$$
	
	we have
	
	$$
	\|V_yUA-I_X\|<1.
	$$
	
	Hence, by the Neumann series theorem, the operator
	
	$$
	V_yUA
	$$
	
	is bounded and invertible on \(X\).
	
	Now define
	
	$$
	A_y:=A(V_yUA)^{-1}.
	$$
	
	Since \(A\) and \(V_yUA\) are bounded and invertible, \(A_y\) is also
	bounded and invertible. Moreover,
	
	$$
	V_yUA_y
	=
	V_yUA(V_yUA)^{-1}
	=
	I_X.
	$$
	
	Thus, for every \(x\in X\),
	
	$$
	x
	=
	V_yUA_yx
	=
	\sum_{i\in I}f_i(A_yx)y_i.
	$$
	
	Hence, \(\{f_i\}_{i\in I}\) is a \(g\)-dual Banach frame for
	\(\{y_i\}_{i\in I}\) with respect to \(X_a\), associated with the
	bounded invertible operator \(A_y\).
\end{proof}

This result confirms that the g-duality property remains intact under uniformly controlled perturbations of the synthesis vectors. It highlights the robustness of the g-dual framework and reinforces its applicability in practical scenarios where stability under deviation is essential.

\end{document}